\newcommand\Zset{\mathbb {Z}}
\newcommand\Qmax{Q^r_{\mathrm{max}}}
\newcommand\Qlmax{Q^l_{\mathrm{max}}}
\newcommand\Qsimmax{Q^{\sigma}_{\mathrm{max}}}
\newcommand\Qtot{Q^r_{\mathrm{tot}}}
\newcommand\Qlrtot{Q_{\mathrm{tot}}}
\newcommand\Qsimtot{Q^{\sigma}_{\mathrm{tot}}}
\newcommand\Qcl{Q^r_{\mathrm{cl}}}
\newcommand\Qlrcl{Q_{\mathrm{cl}}}
\newcommand\f{{\mathcal F}}
\newcommand\te{{\mathcal T}}
\newcommand\pe{{\mathcal P}}
\newcommand\ef{{\mathfrak F}}
\newcommand\eg{{\mathfrak G}}
\newcommand\de{{\mathfrak D}}
\newcommand\dirlim{\mathop{\varinjlim}\limits}
\newcommand\ce{{\mathcal C}}
\newcommand\tor{\mathrm{Tor}}
\newcommand\homo{\mathrm{Hom}}
\newcommand\ann{{\mathrm{ann}}}
\newcommand\coker{{\mathrm{coker}}}
\newtheorem{theorem}{Theorem}[section]
\newtheorem{lemma}{Lemma}[section]
\newtheorem{corollary}{Corollary}[section]
\newtheorem{proposition}{Proposition}[section]
\theoremstyle{definition}
\newtheorem{definition}{Definition}[section]
\newtheorem{example}{Example}[section]
\begin{document}

\title{Perfect Symmetric Rings of Quotients}

\author{Lia Va\v s}

\address{Department of Mathematics, Physics and Statistics,
University of the Sciences in Philadelphia, 600 S. 43rd St.,
Philadelphia, PA 19104}

\email{l.vas@usp.edu}

\begin{abstract}
Perfect Gabriel filters of right ideals and their corresponding right rings of quotients have the desirable feature that every module of quotients is determined solely by the right ring of quotients. On the other hand, symmetric rings of quotients have a symmetry that mimics the commutative case. In this paper, we study rings of quotients that combine these two desirable properties.

We define the symmetric versions of a right perfect ring of quotients and a right perfect Gabriel filter -- the perfect symmetric ring of quotients and the perfect symmetric Gabriel filter and study their properties. Then we prove that the standard construction of the total right ring of quotients $\Qtot(R)$ can be adapted to the construction of the largest perfect symmetric ring of quotients -- the total symmetric ring of quotients $\Qsimtot(R).$ We also demonstrate that Morita's construction of $\Qtot(R)$ can be adapted to the construction of $\Qsimtot(R).$
\end{abstract}

\keywords{Torsion Theory; Gabriel Filter; Ring of Quotients; Perfect; Total.}

\subjclass[2000]{ 16S90, %Maximal ring of quotients, torsion theories, radicals on module categories
16N80} %General radicals and rings

\maketitle

\section{Introduction}

As T.Y. Lam puts it in \cite{Lam},  noncommutative localization is ``The Good, the Bad and the Ugly''. It is ``good'' since for a class of noncommutative rings we can mimic the construction of a quotient field of an integral domain and embed those rings in division rings. It is "bad`` since not all rings -- not even all domains -- can be embedded in a division ring. Finally, it is "ugly`` because even those rings that can have such an embedding might have nonisomorphic division hulls.

Different types of rings of quotients have been used in attempts to rectify the ''badness``. For example, the maximal right ring of quotients $\Qmax(R)$ exists for every ring $R$ and has properties that bring $\Qmax(R)$ closer to being a division ring than $R$. However, $\Qmax(R)$ may fail to have some features of the classical ring of quotients $\Qcl(R)$ that we would prefer to keep.

Gabriel's work in \cite{Gabriel} provides the basis for a uniform treatment of different rings of quotients. Presently, torsion theories provide a framework suitable for a comprehensive study of rings of quotients. Starting with a hereditary torsion theory $\tau$ or, equivalently, with a Gabriel filter of right ideals $\ef,$ one can define the module of quotients $M_{\ef}$ for every right $R$-module $M.$ For $M=R,$ we obtain the right ring of quotients $R_{\ef}.$ Using this approach, $\Qmax(R)$ is the right ring of quotients with respect to the filter of dense right ideals.

If a Gabriel filter $\ef$ is such that all the modules of quotients $M_{\ef}$ are determined solely by the right ring of quotients $R_{\ef}$ by
\[M_{\ef}\cong M\otimes_R R_{\ef},\] then $\ef$ and $R_{\ef}$ are called perfect. This feature makes perfect filters interesting to work with. Perfect right rings of quotients and perfect filters have been studied and characterized in more details (see Thm. 2.1, p. 227 in \cite{Stenstrom} and Prop. 3.4, p. 231 in \cite{Stenstrom}).
The largest perfect right ring of quotients is another attempt to find a ''good, not-bad, not-ugly`` ring of quotients. Indeed, for every ring the largest perfect right ring of quotients exists, is denoted by $\Qtot(R),$ and is called the
total right ring of quotients.  It is a subring of $\Qmax(R)$ and, if $R$ is right Ore, it contains $\Qcl(R).$

Noncommutative localization might have another ''bad`` property that classical localization of an integral domain does not have - non symmetry. Namely, left localization might not yield the same quotient as right localization. The (one-sided) noncommutative localization has another downfall. Namely, if $R$ and $S$ are commutative rings, $\ef$ a right Gabriel filter on $R,$ $S$ a right ring of quotients with respect to some filter $\ef'$ and $f:R\rightarrow S$ a homomorphism such that $f(I)S\in \ef'$ for all $I\in \ef,$ then $f$ extends uniquely to a ring homomorphism $R_{\ef}\rightarrow S.$ However, this property might fail in noncommutative case (see Example 4.29, p 98 in \cite{Ortega_thesis}).

As a result, the symmetric version of rings of quotients has begun to attract ring theorists. The symmetric Martindale ring of quotients (see 14B in \cite{Lam}) and the maximal symmetric ring of quotients $\Qsimmax(R)$ (introduced in \cite{Utumi}, studied in \cite{Lanning} and \cite{Ortega_paper_Qsimmax}) emerged. Schelter \cite{Schelter} generalized the commutative case by working over the ring $R\otimes_{\Zset}R^{op}$ and constructed symmetric rings of quotients without the downfall of the noncommutative case. Schelter's work on symmetric rings of quotients parallels Gabriel's work on right rings of quotients -- it provides the basis for a uniform treatment of two-sided rings of quotients using torsion theories. Namely, if $\ef_l$ and $\ef_r$ are left and right Gabriel filters, the symmetric ring of quotients $_{\ef_l}R_{\ef_r}$ can be defined.
Using this approach, $\Qsimmax(R)$ is a two-sided localization with respect to left and right dense ideals. In \cite{Ortega_paper} and \cite{Ortega_thesis}, Ortega defines the symmetric modules of quotients $_{\ef_l}M_{\ef_r}$ with respect to left and right Gabriel filters $\ef_l$ and $\ef_r$ of $R$-bimodules $M.$ If $M=R,$ Ortega obtains Schelter's $_{\ef_l}R_{\ef_r}.$

In this work, we define the symmetric version of a right perfect ring of quotients and the total right ring of quotients. Specifically, if $\ef_l$ and $\ef_r$ are left and right Gabriel filters and $_{\ef_l}R_{\ef_r}$ the symmetric ring of quotients, we study the conditions for the symmetric filter induced by $\ef_l$ and $\ef_r$ to have the property
\[_{\ef_l}M_{\ef_r}\cong\, _{\ef_l}R_{\ef_r}\otimes_R M\otimes_R\,_{\ef_l}R_{\ef_r}\]
for all $R$-bimodules $M.$ We characterize the perfect symmetric rings of quotients and perfect symmetric Gabriel filters proving the symmetric version of Theorem 2.1, p. 227 in \cite{Stenstrom} and Proposition 3.4, p. 231 in \cite{Stenstrom}. Then, we prove that every ring has the largest perfect symmetric ring of quotients -- the total symmetric ring of quotients $\Qsimtot(R).$ We prove that Morita's construction of $\Qtot(R)$ can be adapted to the construction of $\Qsimtot(R).$

In section \ref{section_right_quotients}, we review some preliminaries of one-sided torsion theories and rings of quotients. We review the characterization of perfect right rings of quotients and perfect right filters we attempt to adapt to the symmetric case.

In section \ref{section_symmetric_quotients}, we review the symmetric version of notions from section \ref{section_right_quotients} and prove that the symmetric version of tensoring torsion theory has all the properties the one-sided version of tensoring theory had (Proposition \ref{symmetric_tensor}).

In section \ref{section_symmetric_perfect}, we present the characterization of perfect symmetric rings of quotients (Theorem \ref{PerfectSymmetricQuotient}) and perfect symmetric filters (Theorem \ref{perfect_symmetric_filter}).

In section \ref{section_total_symmetric}, we define the perfect symmetric ring of quotients $\Qsimtot(R)$ (Theorem \ref{total_symmetric_quotient}). We prove that $\Qsimtot(R)$ has the symmetric version of properties that are satisfied by $\Qtot(R).$ We study $\Qsimtot(R)$ for some specific classes of rings and illustrate our work with some examples.

The total right ring of quotients $\Qtot(R)$ is usually constructed as a directed union of all
perfect right rings of quotients. Morita in \cite{Morita3} has a different
approach. He starts from the
maximal right ring of quotients $\Qmax(R)$ to construct
$\Qtot(R)$ by transfinite induction on ordinals, "descending" from
$\Qmax(R)$ towards $R$ instead of "going upwards" starting from
$R$ using the directed family as in the classical construction.
In section \ref{section_Morita_symmetric}, we prove that the symmetric version of this construction produces the total symmetric ring of quotients $\Qsimtot(R)$ (Theorem \ref{Moritas_construction}).

We finish the paper with a list of open problems in section \ref{section_questions}.

\section{Preliminaries on one-sided rings of quotients}
\label{section_right_quotients}

In this paper, a ring is an associative ring with identity. If $M$ is a right $R$-module with submodule $N$ and $m\in M,$  $(m: N)$ denotes $\{r\in R\, | \, mr\in N\}.$ Similarly, if $M$ is a left $R$-module, $(N:m)$ denotes $\{r\in R\, | \, rm\in N\}.$

Throughout this paper, we shall use the definition of torsion theory, hereditary and faithful torsion theory, and Gabriel filter  as given in \cite{Stenstrom} or \cite{Bland_book}. $\tau = (\te, \f)$ denotes that $\tau$ is a torsion theory with torsion class $\te$ and torsion-free class $\f.$ If $M$ is a right $R$-module, $\te(M)$ is the torsion submodule of $M$ and $\f(M)$ is the torsion-free quotient $M/\te(M).$
We can consider the notion of torsion theory for right or left $R$-modules. To emphasize if we are referring to right or left version of these notions, we shall be referring to right and left torsion theories.

If $\tau$ is a hereditary torsion theory with Gabriel filter $\ef$ and $M$ is a right $R$-module, the module of quotients $M_{\ef}$ of $M$ is defined by
\[M_{\ef}=\dirlim_{I\in\ef}\homo(I, \frac{M}{\te(M)})\]
(see chapter IX in \cite{Stenstrom}).
The $R$-module $R_{\ef}$ has a ring structure and is called the
right ring of quotients with respect to $\tau.$ $M_{\ef}$ has a structure of a right
$R_{\ef}$-module (see pages 195--197 in \cite{Stenstrom}). If $\ef$ is a Gabriel filter of left ideals and $M$ a left $R$-module we denote the module of quotients of $M$ with respect to $\ef$ by $_{\ef}M.$

The localization map $q_M:M\rightarrow M_{\ef}$ (the composition of
$M\twoheadrightarrow M/\te(M)$ with $M/\te(M)\hookrightarrow M_{\ef}$) defines a left exact functor $q$ from the category of right $R$-modules to the category of right $R_{\ef}$-modules (see \cite{Stenstrom} p. 197--199).

We recall some important examples of torsion theories.
\begin{example}
(1) The largest hereditary faithful torsion is called the {\em Lambek torsion theory}.
Its Gabriel filter is the set of
all dense right ideals (see Proposition VI 5.5, p. 147 in
\cite{Stenstrom}) and its right ring
of quotients is the maximal right ring of quotients $\Qmax(R)$ (see sections 13B and 13C in \cite{Lam} and
Example 1, p. 200, \cite{Stenstrom}).

(2) If $R$ is a right Ore ring, the  {\em classical torsion theory} is a
hereditary and faithful torsion theory defined by
$\te(M)=\{m\in M|mt=0$ for some nonzero regular element $t\}$ for a right
$R$-module $M.$ The classical right ring of quotients  $\Qcl(R)$ is the corresponding right ring of quotients (Example 2, p. 200, \cite{Stenstrom}).

(3) Let $f:R\rightarrow S$ be a ring homomorphism. The collection of all $R$-modules $M$ such that $M\otimes_R S = 0$ is closed under quotients, extensions and direct sums. Moreover, if $f$ makes $S$ into a flat left $R$-module, then this
collection is closed under submodules and, hence, defines a
hereditary torsion theory. We denote this torsion
theory by $\tau_S$ and its torsion class by $\te_S.$
If $f$ makes $S$ into a flat right $R$-module, we can define a left hereditary torsion theory by the condition that $M$ is torsion iff $S\otimes_R M=0.$ We denote this torsion
theory by $_S\tau.$
\end{example}

We note an easy lemma related to Example (3) above.

\begin{lemma} Let $f$ be a ring homomorphism that makes $S$ into a flat left $R$-module.
\begin{enumerate}
\item For any $M,$ $\te_S(M)=\ker( i_M: M\rightarrow M\otimes_R S).$ $M\in\te_S(M)$ iff $i_M: M\rightarrow M\otimes_R S$ is 0-map.

\item The sequence below is exact.
\begin{diagram}
0&\rTo& \te_S(M)&\rTo& M&\rTo &M\otimes_R S&\rTo &M\otimes_R S/f(R)&\rTo &0
\end{diagram}

\item If $f$ is a monomorphism, all flat right $R$-modules are $\tau_S$-torsion-free and $\tau_S$ is faithful.
\end{enumerate}
\label{properties_of_onesided_tensor}
\end{lemma}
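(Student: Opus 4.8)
The whole lemma rests on part (1), so the plan is to prove that first and then deduce (2) and (3) from it. Throughout I would use that the canonical map $i_M\colon M\to M\otimes_R S$ is, modulo the identification $M\otimes_R R\cong M$, just $1_M\otimes f$, that this family of maps is natural in $M$, and that $S$ is flat as a left $R$-module.

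For part (1) I would establish the two inclusions separately. Since $\te_S(M)$ lies in the torsion class we have $\te_S(M)\otimes_R S=0$, so tensoring $0\to\te_S(M)\to M\to M/\te_S(M)\to 0$ with the flat module $S$ shows that the map $\pi\otimes 1_S\colon M\otimes_R S\to (M/\te_S(M))\otimes_R S$ induced by the quotient $\pi$ is an isomorphism; naturality of $i$ then gives $i_M=(\pi\otimes 1_S)^{-1}\circ i_{M/\te_S(M)}\circ\pi$, which vanishes on $\te_S(M)$, so $\te_S(M)\subseteq\ker i_M$. For the reverse inclusion put $N=\ker i_M$. Flatness makes $N\otimes_R S\to M\otimes_R S$ injective, and naturality of $i$ applied to $N\hookrightarrow M$ shows that $i_N$ composed with this injection equals $i_M|_N=0$, whence $i_N=0$. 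But $i_N=0$ forces $N\otimes_R S=0$, since every simple tensor satisfies $n\otimes s=(n\otimes 1_S)s=i_N(n)\cdot s=0$ (here one uses that $S$ is unital). Thus $N$ is torsion, i.e. $N\subseteq\te_S(M)$, giving equality. The ``iff'' assertion is the special case $M=\te_S(M)$, equivalently $\ker i_M=M$.

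Part (2) is then formal. Viewing $S/f(R)$ as a left $R$-module — legitimate since $r\cdot f(r')=f(rr')$ shows $f(R)$ is a left $R$-submodule of $S$ — we have $\coker(f\colon R\to S)=S/f(R)$, so right-exactness of $-\otimes_R S$ applied to $R\xrightarrow{f}S\to S/f(R)\to 0$ yields exactness of $M\xrightarrow{i_M}M\otimes_R S\to M\otimes_R (S/f(R))\to 0$; splicing $0\to\te_S(M)\to M$ in front and invoking part (1) to identify $\ker i_M$ with $\te_S(M)$ produces the asserted four-term exact sequence. For part (3), if $f$ is injective and $M$ is a flat right $R$-module, then tensoring the injection $R\hookrightarrow S$ with $M$ keeps it injective, i.e. $i_M$ is a monomorphism, so $\te_S(M)=\ker i_M=0$ by part (1) and $M$ is $\tau_S$-torsion-free; since $R_R$ is flat, $R$ itself is torsion-free, which is precisely the faithfulness of $\tau_S$.

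The only step requiring genuine care is the reverse inclusion in part (1), namely that $\ker i_M$ is again torsion: the two points that make it work are that flatness of $S$ lets one transport the vanishing of $i_M$ on $N$ to the vanishing of $i_N$, and that $i_N=0$ collapses all of $N\otimes_R S$ because $S$ has an identity. Everything else is a routine application of right-exactness of the tensor functor and flatness of $S$.
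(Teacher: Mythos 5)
Your proof is correct and follows essentially the same route as the paper: both inclusions in part (1) come down to flatness of $S$ plus the observation that $n\otimes 1=0$ forces $n\otimes s=0$, part (2) is the identification $\ker i_M=\te_S(M)$ and $\coker i_M=M\otimes_R S/f(R)$, and part (3) tensors $0\to R\to S\to S/f(R)\to 0$ with $M$ (the paper phrases this via $\te_S(M)\cong\tor_1^R(M,S/f(R))$, you phrase it as $i_M$ being injective for flat $M$ — the same computation). You simply supply the details the paper leaves implicit.
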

\begin{proof}
(1) From the definition of $\tau_S$ it follows that the largest torsion submodule of $M$ is the kernel of the map $i_M.$ For the second part of the claim, one direction is obvious. The other holds since $m\otimes 1=0$ implies that $m\otimes s=0$ for all $s\in S.$

(2) The exactness of the sequence $0\rightarrow \ker i_M \rightarrow M\rightarrow M\otimes_R S\rightarrow \coker i_M \rightarrow 0$ proves the claim since $\ker i_M=\te_S(M)$ and $\coker i_M=M\otimes_R S/f(R).$

(3) Tensoring the exact sequence $0\rightarrow R\rightarrow S\rightarrow S/f(R)\rightarrow0$ with $M$ from the left produces the exact sequence $0\rightarrow \tor_1^R(M, S/R)\rightarrow M\otimes_R R \rightarrow M\otimes_R S\rightarrow M\otimes_R S/f(R) \rightarrow 0.$ The isomorphism $M\cong M\otimes_R R,$ gives us the isomorphism $\te_S(M)\cong \tor_1^R(M, S/f(R)).$ Thus, all flat right $R$-modules are $\tau_S$ torsion-free.
\end{proof}

Recall that the ring homomorphism $f:R\rightarrow S$ is
called a {\em ring epimorphism} if for all rings $T$ and
homomorphisms $g,h: S\rightarrow T,$ $gf = hf$ implies $g=h.$ The map $f:R\rightarrow S$ is a ring epimorphism iff the canonical map $S\otimes_R S\rightarrow S$ is bijective or, equivalently, if $S/f(R)\otimes_R S=S\otimes_R S/f(R)=0$ (see Proposition XI 1.2, p. 226 in \cite{Stenstrom}). A ring epimorphism $f$ necessarily maps the identity $1_R$ of $R$ to the identity $1_S$ of $S.$ To see this assume that $f(1_R)=s.$ This implies that the right multiplication by $s$ is the identity map on $S$ since the two maps are the same on $f(R):$ $f(r)=f(r1_R)=f(r)s.$ Thus $s=1_S.$

The situation when $S$ is flat as left $R$-module is of special
interest. The following theorem characterizes such epimorphisms (for proof see  p. 227 in \cite{Stenstrom}).
\begin{theorem} For a ring homomorphism $f:R\rightarrow S$ the following conditions are equivalent.
\begin{enumerate}
\item $f$ is a ring epimorphism and makes $S$ into a flat left
$R$-module.

\item The family of right ideals $\ef=\{I | f(I)S=S\}$ is a
Gabriel filter and there is a ring isomorphism $g: S\cong R_{\ef}$ such that $g\circ f$ is the canonical map $q_R: R\rightarrow R_{\ef}.$

\item The following conditions are satisfied.
\begin{itemize}
\item[i)] For every $s\in S,$ there are $r_i\in R$ and $s_i\in S,$ $i=1,\ldots,n$ such that $sf(r_i)\in f(R)$ for all $i=1,\ldots,n$ and $\sum_{i=1}^n f(r_i)s_i=1.$

\item[ii)] If $f(r)=0,$ then there are $r_i\in R$ and $s_i\in S,$ $i=1,\ldots,n$ such that $r r_i=0$ for all $i=1,\ldots,n$ and $\sum_{i=1}^n f(r_i)s_i=1.$
\end{itemize}
\end{enumerate}
\label{PerfectQuotient}
\end{theorem}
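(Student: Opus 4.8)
The plan is to prove the chain of equivalences $(1)\Rightarrow(2)\Rightarrow(3)\Rightarrow(1)$, relying on the description of ring epimorphisms given just before the statement (the map $f$ is an epimorphism iff $S/f(R)\otimes_R S = S\otimes_R S/f(R) = 0$) and on Lemma \ref{properties_of_onesided_tensor}.

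For $(1)\Rightarrow(2)$, I would first check that $\ef = \{I \mid f(I)S = S\}$ is a Gabriel filter. Since $S$ is flat over $R$, the torsion theory $\tau_S$ from Example (3) is hereditary, and its Gabriel filter consists of those $I$ with $R/I \in \te_S$, i.e. with $(R/I)\otimes_R S = 0$; tensoring $0\to I\to R\to R/I\to 0$ with the flat module $S$ shows $(R/I)\otimes_R S = S/f(I)S$, so this condition is exactly $f(I)S = S$. Hence $\ef$ is the Gabriel filter of $\tau_S$. Next I would produce the isomorphism $g\colon S\cong R_{\ef}$. The localization map $q_R\colon R\to R_{\ef}$ and the universal property of localization give a ring map $R_{\ef}\to S$; conversely, because $f$ is an epimorphism, $S\otimes_R S\to S$ is bijective, and I would use this together with flatness to show the localization map $R\to S$ identifies $S$ with $R_{\ef}$. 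Concretely, one shows $\ker f = \te_S(R)$ (Lemma \ref{properties_of_onesided_tensor}(1)) and that $S/f(R)$ is $\tau_S$-torsion (it is killed by tensoring with $S$ since $S/f(R)\otimes_R S = 0$), so $f(R)$ is $\tau_S$-dense in $S$ and $S$ is $\tau_S$-torsion-free (flatness again, via Lemma \ref{properties_of_onesided_tensor}(3) applied after reducing to the injective case $R/\ker f\hookrightarrow S$); these are exactly the conditions identifying $S$ with $R_{\ef}$ as the localization of $R$.

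For $(2)\Rightarrow(1)$, this is essentially the statement that a right ring of quotients with respect to a \emph{perfect} Gabriel filter is a flat ring epimorphism; but here one must first observe that the filter $\ef = \{I \mid f(I)S = S\}$ is automatically perfect — this is where I expect the real content to lie, and it is presumably the part the authors import from p.~227 of \cite{Stenstrom}. Granting that, flatness of $R_{\ef}$ over $R$ and the fact that $q_R$ is an epimorphism are standard consequences, so I would cite or reconstruct them rather than reprove from scratch. For $(1)\Leftrightarrow(3)$, I would translate the module-theoretic conditions into element-wise statements: condition (3)(i) says precisely that for each $s\in S$ the ideal $\{r\in R \mid sf(r)\in f(R)\} = (s\colon f(R))$ has image generating $S$ as a right ideal, which unwinds $f(R)$ being $\tau_S$-dense in $S$ combined with $\ef$ being a filter (the $\sum f(r_i)s_i = 1$ is the assertion $f(I)S = S$ for the relevant $I$); condition (3)(ii) similarly encodes $\ker f = \te_S(R)$, i.e. that $\ann_r(r)$ lies in $\ef$ whenever $f(r) = 0$. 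So $(1)\Rightarrow(3)$ follows by reading off these denseness/torsion facts established in the proof of $(1)\Rightarrow(2)$, and $(3)\Rightarrow(1)$ (or $(3)\Rightarrow(2)$) by verifying that (i) and (ii) force $S\otimes_R S\to S$ bijective and $S$ flat.

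The main obstacle, as noted, is the flatness–perfectness input in $(2)\Rightarrow(1)$: showing that a flat ring epimorphism is the same as localization at a perfect filter requires the nontrivial homological fact that $R_{\ef}$ is flat over $R$ precisely when $\ef$ is perfect, which I would not grind through but rather invoke from \cite{Stenstrom}. The remaining implications are bookkeeping: carefully matching the three finite-sum conditions in (3) with the bimodule identities $S/f(R)\otimes_R S = 0$, $\ker f = \te_S(R)$, and $S/f(R)$ torsion, using only Lemma \ref{properties_of_onesided_tensor} and the epimorphism criterion.
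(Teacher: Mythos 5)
First, a point of reference: the paper does not prove this theorem at all --- it is quoted from Stenstr\"om (Theorem 2.1, p.~227) and the proof is explicitly deferred there. So the comparison is with the standard argument, which your outline follows in broad strokes but with a genuine gap in $(1)\Rightarrow(2)$. After establishing $\ker f=\te_S(R)$, that $S/f(R)$ is $\tau_S$-torsion, and that $S$ is $\tau_S$-torsion-free, you assert that ``these are exactly the conditions identifying $S$ with $R_{\ef}$.'' They are not: those three conditions only yield an embedding of $S$ into $R_{\ef}$, since the module of quotients is characterized as the torsion-free extension with torsion cokernel that is \emph{in addition} $\ef$-closed. To get surjectivity one must show that every right $R$-homomorphism $\alpha\colon I\to S$ with $I\in\ef$ is left multiplication by an element of $S$; this is the step at the top of p.~228 of Stenstr\"om, it uses the epimorphism property (via $s\otimes 1=1\otimes s$ in $S\otimes_R S$) together with $f(I)S=S$ to manufacture the multiplier, and it is precisely the ingredient this paper reuses in its proof of Theorem \ref{PerfectSymmetricQuotient}. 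Your alternative route via ``the universal property of localization'' to produce a map $R_{\ef}\to S$ is also unavailable: the introduction of this very paper points out that this universal property can fail for noncommutative localizations.

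There is also a structural problem with $(2)\Rightarrow(1)$: you propose to invoke from Stenstr\"om the fact that the filter is ``automatically perfect'' and that perfect filters give flat epimorphisms. But the statement being proved \emph{is} Stenstr\"om's theorem, and the perfectness machinery (Theorem \ref{perfect_filter}) is derived from it rather than the other way around, so this is circular. The non-circular route is $(2)\Rightarrow(3)\Rightarrow(1)$: from $S\cong R_{\ef}$ one reads off (i) and (ii) elementwise (each $s$ is represented on some $I\in\ef$ with $sf(I)\subseteq f(R)$; each $r$ with $f(r)=0$ is torsion, so its right annihilator lies in $\ef$), and flatness is then actually \emph{established} in $(3)\Rightarrow(1)$. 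On that last step your sketch is too thin as well: condition (ii) is only the single-relation instance of the equational criterion for flatness (for the element $1_S$), and promoting it to flatness of all of $S$ requires combining it with (i); this is the computation at the bottom of p.~228 of Stenstr\"om, again cited by the paper in the proof of Theorem \ref{PerfectSymmetricQuotient}.
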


If $f:R\rightarrow S$ satisfies the equivalent conditions of this
theorem, $S$ is called a {\em perfect right ring of quotients,} a flat epimorphic extension of $R,$ or
a perfect right localization of $R.$ A Gabriel filter $\ef$ and the corresponding torsion theory $\tau$ are
called {\em perfect} if the right ring of quotients $R_{\ef}$ is
perfect and $\ef=\{I| q_R(I)R_{\ef}=R_{\ef}\}$.

Note that for a filter $\ef,$ there is a
unique $R_{\ef}$-map $\Theta_M: M\otimes_R R_{\ef}\rightarrow
M_{\ef}$ with $q_M = \Theta_M i_M$ where $i_M$ is the canonical map $i_M: M\rightarrow
M\otimes_R R_{\ef}.$ Perfect filters are characterized by the property that the map $\Theta_M$ is an isomorphism
for every $M.$ Moreover, the following theorem holds (for proof see Thm. XI 3.4, p.
231 in \cite{Stenstrom}).
\begin{theorem} The following properties of a Gabriel filter $\ef$ are
equivalent.
\begin{enumerate}
\item $\ef$ is perfect.

\item The kernel of $i_M: M\rightarrow M\otimes_R R_{\ef}$ is the
torsion submodule of $M$ for the torsion theory determined by $\ef$ for every
module $M.$

\item $q_M(I)R_{\ef}=R_{\ef}$ for every $I\in \ef.$

\item The map $\Theta_M:M\otimes_R R_{\ef}\rightarrow M_{\ef}$ is an
isomorphism for every $M.$

\item The functor $q$ mapping the category of $R$-modules to the
category of $R_{\ef}$-modules given by $M\mapsto M_{\ef}$ is exact
and preserves direct sums.

\item $\ef$ has a basis consisting of finitely generated ideals
and the functor $q$ is exact.
\end{enumerate}
\label{perfect_filter}
\end{theorem}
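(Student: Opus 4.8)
The plan is to prove the six statements equivalent through the cycle $(1)\Rightarrow(4)\Rightarrow(2)\Rightarrow(3)\Rightarrow(1)$, and then to attach $(5)$ and $(6)$ by showing $(4)\Leftrightarrow(5)\Leftrightarrow(6)$. Three of the four implications in the cycle are routine. Since $q_M=\Theta_M\circ i_M$ and the kernel of the localization map $q_M\colon M\to M_{\ef}$ is always $\te(M)$, injectivity of $\Theta_M$ forces $\ker i_M=\ker q_M=\te(M)$, which is $(4)\Rightarrow(2)$. For $(2)\Rightarrow(3)$ I would apply $(2)$ to the torsion module $R/I$ with $I\in\ef$: then $i_{R/I}$ is the zero map, and since $R/I\otimes_R R_{\ef}\cong R_{\ef}/q_R(I)R_{\ef}$ with $i_{R/I}$ sending $1+I$ to the coset of $1$, we get $q_R(I)R_{\ef}=R_{\ef}$. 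For the reverse $(3)\Rightarrow(2)$, the inclusion $\ker i_M\subseteq\te(M)$ is again the observation above, and for the other inclusion one factors an element $m\in\te(M)$ through $R/\ann(m)$ (with $\ann(m)\in\ef$) and invokes $i_{R/\ann(m)}=0$, which holds by $(3)$.

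The substantial implication is $(3)\Rightarrow(1)$, and it is the step I expect to require the most care. I would apply Theorem \ref{PerfectQuotient} to the canonical map $q_R\colon R\to R_{\ef}$. Because $R_{\ef}/q_R(R)$ is a torsion $R$-module, every $s\in R_{\ef}$ has $\{a\in R:s\,q_R(a)\in q_R(R)\}\in\ef$; because $\ker q_R=\te(R)$, every $r$ with $q_R(r)=0$ has $\ann(r)\in\ef$. Feeding these ideals into hypothesis $(3)$, which says $q_R(I)R_{\ef}=R_{\ef}$ for each $I\in\ef$, produces exactly conditions (i) and (ii) of Theorem \ref{PerfectQuotient}(3); hence $q_R$ is a ring epimorphism making $R_{\ef}$ flat as a left $R$-module, so $R_{\ef}$ is a perfect right ring of quotients. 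It remains to check $\ef=\{I:q_R(I)R_{\ef}=R_{\ef}\}$. The inclusion $\subseteq$ is $(3)$; for $\supseteq$, given such an $I$, write $1=\sum_j q_R(a_j)x_j$ with $a_j\in I$ and $x_j\in R_{\ef}$, use the fact that $R_{\ef}/q_R(R)$ is torsion to choose $K_j\in\ef$ with $x_j\,q_R(K_j)\subseteq q_R(R)$, and show $(I:b)\in\ef$ for each $b$ in the finite intersection $K=\bigcap_j K_j\in\ef$: writing $x_j\,q_R(b)=q_R(c_j)$ gives $q_R(b)=q_R\bigl(\sum_j a_j c_j\bigr)$, so $b-\sum_j a_j c_j$ is annihilated by an ideal of $\ef$, and then $bd\in I$ for every $d$ in that ideal. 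The Gabriel filter axioms then give $I\in\ef$. \emph{This last bookkeeping with ideals is the only genuinely delicate point of the whole argument.}

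For $(1)\Rightarrow(4)$: perfectness of $\ef$ means that its torsion theory coincides with $\tau_{R_{\ef}}$ (both have Gabriel filter $\{I:q_R(I)R_{\ef}=R_{\ef}\}$), so Lemma \ref{properties_of_onesided_tensor} applies. I would show that $M\otimes_R R_{\ef}$ is $\ef$-closed for every $M$. It is torsion-free because $R_{\ef}\otimes_R R_{\ef}\cong R_{\ef}$ makes the canonical map $M\otimes_R R_{\ef}\to(M\otimes_R R_{\ef})\otimes_R R_{\ef}$ split injective, so its torsion submodule vanishes by Lemma \ref{properties_of_onesided_tensor}(1). By Lemma \ref{properties_of_onesided_tensor}(2) the cokernel of the embedding $M/\te(M)\hookrightarrow M\otimes_R R_{\ef}$ is $M\otimes_R(R_{\ef}/q_R(R))$, which is torsion since it is a tensor product with the torsion module $R_{\ef}/q_R(R)$. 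Finally it is $\ef$-injective: for $I\in\ef$, flatness together with $q_R(I)R_{\ef}=R_{\ef}$ gives $I\otimes_R R_{\ef}\cong R_{\ef}$, so by hom-tensor adjunction $\homo_R(I,M\otimes_R R_{\ef})\cong\homo_{R_{\ef}}(R_{\ef},M\otimes_R R_{\ef})$ with the restriction map from $\homo_R(R,M\otimes_R R_{\ef})$ an isomorphism, forcing $\mathrm{Ext}^1_R(R/I,M\otimes_R R_{\ef})=0$. By the universal property of the localization, the comparison map $\Theta_M$ is then an isomorphism, which is $(4)$.

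The remaining equivalences $(4)\Leftrightarrow(5)\Leftrightarrow(6)$ are formal. Under $(4)$, the functor $q$ is naturally isomorphic via $\Theta$ to $-\otimes_R R_{\ef}$, which is right exact and preserves direct sums; since $q$ is always left exact, this gives $(5)$. Conversely $\Theta$ is a natural transformation of the two right-exact, direct-sum-preserving functors $-\otimes_R R_{\ef}$ and $q$ that is the identity on $R$, hence an isomorphism on free modules and, by the five lemma applied to a free presentation of $M$, on every $M$; this is $(5)\Rightarrow(4)$. For $(5)\Rightarrow(6)$, an exact functor preserving direct sums preserves direct limits, so $\te=\ker q$ commutes with direct limits, which is equivalent to $\ef$ having a basis of finitely generated ideals. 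For $(6)\Rightarrow(5)$, with such a basis one has $M_{\ef}=\dirlim_I\homo_R(I,M/\te(M))$ over finitely generated $I\in\ef$, and $\homo_R(I,-)$ commutes with direct sums for finitely generated $I$, so $q$ preserves direct sums.
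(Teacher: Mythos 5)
The paper does not prove this theorem: it is quoted as background with an explicit pointer to Stenstr\"om, Thm.\ XI 3.4, so there is no in-paper argument to compare against, but your proposal is a correct reconstruction of essentially that standard proof and every implication checks out (including the delicate ideal bookkeeping in $(3)\Rightarrow(1)$ and the identification of $M\otimes_R R_{\ef}$ as an $\ef$-closed module in $(1)\Rightarrow(4)$). The only step you leave as a black box --- in $(5)\Rightarrow(6)$, that a torsion radical commuting with direct limits forces a basis of finitely generated ideals --- is true but needs a small argument; it is quicker to extract the basis directly from $(3)$ together with perfectness: for $I\in\ef$ write $1=\sum_j q_R(a_j)x_j$ with $a_j\in I,$ and observe that the finitely generated subideal $\sum_j a_jR\subseteq I$ already satisfies $q_R\bigl(\sum_j a_jR\bigr)R_{\ef}=R_{\ef}$ and hence lies in $\ef=\{J\mid q_R(J)R_{\ef}=R_{\ef}\}.$
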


Every ring has a maximal perfect right ring of quotients, unique up to
isomorphism (Theorem XI 4.1, p. 233, \cite{Stenstrom}). It is
called the {\em total right ring of quotients} (also maximal perfect
right localization of $R$, or maximal flat epimorphic right ring of
quotients of $R$) and is denoted by $\Qtot(R).$ $\Qtot(R)$ can be constructed as a directed union of all perfect right rings of quotients that are contained in $\Qmax(R).$

\section{Symmetric rings of quotients}
\label{section_symmetric_quotients}

If $R$ and $S$ are two rings, $\ef_l$ a Gabriel filter of left $R$-ideals and $\ef_r$ a Gabriel filters of right $S$-ideals, define the {\em filter induced by $\ef_l$ and $\ef_r$}, $_{\ef_l}\ef_{\ef_r}$ (or $_{l}\ef_{r}$ for short), to be  the set of right ideals of $S\otimes_{\Zset}R^{op}$ containing an ideal of the form $J\otimes R^{op}+S\otimes I$ where $I\in\ef_l$ and $J\in \ef_r.$ This defines a Gabriel filter (\cite{Ortega_thesis}, p. 100). We denote the corresponding {\em torsion theory induced by $\tau_l$ and $\tau_r$} by  $_l\tau_r.$
If $_RM_S$ is an $R$-$S$-bimodule, $\te_l(M),$ $\te_r(M)$ and $_l\te_r(M)$ torsion submodules of $M$ for $\tau_l,$ $\tau_r$ and  $_l\tau_r$ respectively, then $_l\te_r(M)=\te_l(M)\cap\te_r(M).$ Conversely, the class of all $R$-$S$-bimodules that are in $_l\te\cap \te_r$ defines the torsion class of the torsion theory induced by $_l\tau$ and $\tau_r.$

In \cite{Ortega_paper} and \cite{Ortega_thesis}, Ortega defines the {\em symmetric module of quotients} of $M$ with respect to $_l\ef_r$ to be
\[_{\ef_l}M_{\ef_r}=\dirlim_{K\in _l\ef_r}\;  \homo(K, \frac{M}{_l\te_r(M)})\]
where the homomorphisms in the formula are $S\otimes R^{op}$ homomorphisms (equivalently $R$-$S$-bimodule homomorphisms). Ortega shows that an equivalent approach can be obtained considering the compatible pairs of homomorphisms. If $M$ is an $R$-$S$-bimodule, $I\in\ef_l,$ $J\in\eg_r,$ $f:I\rightarrow\, _R\overline{M}$ an $R$-homomorphism and $g:J\rightarrow\overline{M}_S$ a $S$-homomorphism where $\overline{M}=M/_l\te_r(M),$ $(f,g)$ is a {\em compatible pair} if $f(i)j=ig(j)$ for all $i\in I,$ $j\in J.$ Define the equivalence relation by $(f,g)\sim(h,k)$ if and only if $f|_{I'}=h|_{I'}$ for some $I'\in \ef_l$ and $g|_{J'}=k|_{J'}$ for some $J'\in \eg_r.$ Then there is a bijective correspondence between elements of $_lM_r$ and the equivalence classes of compatible pairs of homomorphisms (Prop. 1.4 in \cite{Ortega_paper} or Prop. 4.37 in \cite{Ortega_thesis}).

As $_l M_r$ is the module of quotients with respect to filter of right ideals over a ring, we have the $R$-$S$-bimodule map $q_M: M\rightarrow\, _l M_r$ for every bimodule $_R M_S.$ The symmetric module of quotients has properties analogous to the right module of quotients. The lemma below lists these properties. They parallel those for one-sided version given in Lemma 1.2, Lemma 1.3, Lemma 1.5 on p. 196, Proposition 1.8 on p. 198 and observations on p. 197 and 199 in \cite{Stenstrom}).
\begin{lemma} (1) $_l\te_r(M)=\ker( q_M: M\rightarrow\, _l M_r)$ and  $_lM_r=\,_l(M/_l\te_r(M))_r$ is $_l\tau_r$-torsion-free.

(2) $M$ is torsion in $_l\tau_r$ if and only if $_l M_r=0.$

(3) Coker $q_M$ is $_l\tau_r$-torsion module. The maps $q_M$ for $R$-modules $M$ define a left exact functor $q$ from the category of $R$-$S$-bimodules onto itself.
\label{properties_of_module_of_quot}
\end{lemma}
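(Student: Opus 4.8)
The key observation is that the symmetric module of quotients $_lM_r$ is, by definition, a module of quotients of the right $S\otimes_\Zset R^{op}$-module $M$ with respect to the right Gabriel filter $_l\ef_r$. So the strategy is to invoke the one-sided results cited (Lemma 1.2, Lemma 1.3, Lemma 1.5, Proposition 1.8 and the remarks on pp. 196--199 of \cite{Stenstrom}) applied to the base ring $S\otimes_\Zset R^{op}$, and then translate the conclusions back into bimodule language, using the identification of right $S\otimes_\Zset R^{op}$-modules with $R$-$S$-bimodules and the identity $_l\te_r(M)=\te_l(M)\cap\te_r(M)$ recorded just above the statement. For (1), the one-sided theory gives that for the filter $_l\ef_r$ the torsion submodule of $M$ equals $\ker q_M$ and that the localization of $M/(\text{torsion})$ is torsion-free and agrees with that of $M$; since the torsion submodule for $_l\tau_r$ is precisely $_l\te_r(M)$, this is exactly the assertion. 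For (2), a right module over $S\otimes_\Zset R^{op}$ is torsion iff its module of quotients vanishes, which is the one-sided statement; rephrasing ``torsion over $S\otimes_\Zset R^{op}$ for $_l\ef_r$'' as ``torsion for $_l\tau_r$'' gives the claim.

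For (3), again the one-sided theory tells us that $\coker q_M$ is torsion for $_l\ef_r$, hence $_l\tau_r$-torsion, and that $M\mapsto M_{_l\ef_r}$ is a left exact functor from right $S\otimes_\Zset R^{op}$-modules to right $S\otimes_\Zset R^{op}$-modules. The only point needing a remark is that the target category here is the \emph{same} category (bimodules over the original rings) rather than modules over the quotient ring, i.e. we are composing the localization functor with the restriction of scalars along $S\otimes_\Zset R^{op}\to (S\otimes_\Zset R^{op})_{_l\ef_r}$; since restriction of scalars is exact, left exactness is preserved, and the composite lands in $R$-$S$-bimodules. I would also note that for a compatible pair $(f,g)$ representing an element of $_lM_r$, the bimodule action is inherited from the $S\otimes_\Zset R^{op}$-module structure, so no independent verification of the $R$-$S$-bimodule structure on $_lM_r$ is needed beyond what the one-sided machinery already supplies.

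I do not expect a genuine obstacle here: the content is entirely a translation exercise, and the real work was done in setting up the filter $_l\ef_r$ over $S\otimes_\Zset R^{op}$ (cited from \cite{Ortega_thesis}). The one mild subtlety to state carefully is the compatibility of the two descriptions of $_lM_r$ — via $\dirlim_{K\in{}_l\ef_r}\homo(K, M/{}_l\te_r(M))$ and via equivalence classes of compatible pairs — but this is already established in Prop.\ 1.4 of \cite{Ortega_paper} (Prop.\ 4.37 of \cite{Ortega_thesis}) and may simply be quoted. So the proof is short: apply the cited one-sided lemmas over $S\otimes_\Zset R^{op}$, substitute $_l\te_r(M)=\te_l(M)\cap\te_r(M)$, and observe that restriction of scalars keeps everything inside the category of $R$-$S$-bimodules.
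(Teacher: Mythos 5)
Your proposal is correct and matches the paper's (implicit) argument: the paper gives no separate proof of this lemma, merely noting that $_lM_r$ is by definition a one-sided module of quotients over $S\otimes_\Zset R^{op}$ and citing the same results of Stenstr\"om that you invoke. Your additional remarks on restriction of scalars and the identity $_l\te_r(M)=\te_l(M)\cap\te_r(M)$ fill in exactly the translation the paper leaves to the reader.
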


If $R=S,$ the right-sided situation automatically ensures that $_l(R\otimes R^{op})_r$ has a ring structure. However, we would like to have a ring structure on $_lR_r.$  Fortunately, this is the case (see \cite{Schelter} and Lemma 1.5 in \cite{Ortega_paper}). The ring $_lR_r$ is called {\em the symmetric ring of quotients} with respect to torsion theory $_l\tau_r.$

\begin{example}
(1) Let $_l\de_r$ denote the filter induced by the filters of all dense right and left $R$-ideals $\de_r$ and $\de_l$. The corresponding symmetric ring of quotients is called the {\em maximal symmetric ring of quotients} and is denoted it by $\Qsimmax(R).$
Every symmetric ring of quotients of a symmetric torsion theory induced by hereditary and faithful left and right torsion theories embeds in $\Qsimmax(R).$

(2) If $R$ is a right and left Ore ring, the
classical left and the classical right rings of quotients coincide (see p. 303 in \cite{Lam}). We denote this ring by $\Qlrcl(R).$ The torsion theory for $R$-bimodules defined by the condition that an $R$-bimodule $M$ is a torsion module iff for every $m\in M$, there are nonzero regular elements $t,s\in R$ such that $sm=mt=0$ is induced by the torsion theories $\tau_{\Qlrcl(R)}$ and $_{\Qlrcl(R)}\tau$ (exercise 18, p. 318 in \cite{Lam}). This torsion theory is called the {\em classical torsion theory for an Ore ring}. $\Qlrcl(R)$ is equal to the symmetric ring of quotients with respect to the classical torsion theory (see Corollary \ref{Qsimtot_of_special_rings}).

(3) Let $f:R\rightarrow S$ be a ring homomorphism. The collection of all $R$-bimodules $M$ such that $S\otimes_R M\otimes_R S= 0$
is closed under quotients, extensions and direct sums. If $f$ makes $S$ into a flat right and a flat left $R$-module, then this
collection is closed under submodules and, hence, defines a hereditary torsion theory. We call this torsion theory {\em symmetric tensoring torsion theory} and denote it by $_S\tau_S.$ We denote its filter by  $_S\ef_S.$ Let $_S\ef$ and $\ef_S$ denote the filters of $_S\tau$ and $\tau_S$ respectively. Recall that $M$ is a $_S\tau$-torsion module if and only if $S\otimes_R M=0,$ and that that $M$ is a $\tau_S$-torsion module if and only if $M\otimes_R S=0.$
Also, $_S\ef=\{I$ left $R$-ideal $| Sf(I)=Sf(R) \},$ and $\ef_S=\{J$ right $R$-ideal $| f(J)S=f(R)S\}.$ If $f$ maps $1_R$ to $1_S,$  $_S\ef=\{I$ left $R$-ideal $| Sf(I)=S \},$ and $\ef_S=\{J$ right $R$-ideal $| f(J)S=S\}.$ We prove the following properties of the torsion theory $_S\tau_S.$
\label{Examples_symmetric}
\end{example}

\begin{proposition} Let $f:R\rightarrow S$ be a ring homomorphism that makes $S$ into a flat right and a flat left $R$-module and $M$ be an $R$-bimodule.
\begin{enumerate}
\item $_S\tau_S$ is torsion theory induced by torsion theories $_S\tau$ and $\tau_S$ and it coincides with the torsion theory $\tau_{S\otimes S^{op}}.$ Thus, $S\otimes_R M\otimes_R S=0$ if and only if $S\otimes_R M=0$ and $M\otimes_R S=0.$

\item $_S\te_S(M)=\;_S\te(M)\cap \te_S(M)=\ker(i_M: M\rightarrow S\otimes_R M\otimes_R S).$

\item If $f(1_R)=1_S,$ $_S\ef_S=\{K$ ideal of $R| Sf(K)S=S\}=\{K$ ideal of $R| f(K)(S\otimes S^{op})=S\otimes S^{op}\}=\{K$ ideal of $R|J\otimes R^{op}+R\otimes I\subseteq K$ for some left ideal $I$ and right ideal $J$ such that $Sf(I)=S$ and $f(J)S=S\}.$

\item If $f$ is a monomorphism, then all $R$-bimodules that are left and right $R$-flat are $_S\tau_S$-torsion-free. In particular, $_S\tau_S$ is faithful.

\item $S\otimes_R M\otimes_R S$ is $_S\tau_S$-torsion-free.
\end{enumerate}
\label{symmetric_tensor}
\end{proposition}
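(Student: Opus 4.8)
The plan is to derive all five assertions from the one-sided statements in Lemma~\ref{properties_of_onesided_tensor}, the key being to realise ${}_S\tau_S$ as the tensoring torsion theory of the ring homomorphism $f\otimes f^{op}\colon R\otimes_{\Zset}R^{op}\to S\otimes_{\Zset}S^{op}$. First I would record the natural, functorial isomorphism of $R$-bimodules $M\otimes_{R\otimes_{\Zset}R^{op}}(S\otimes_{\Zset}S^{op})\cong S\otimes_R M\otimes_R S$. It shows immediately that the torsion class $\{M\mid S\otimes_R M\otimes_R S=0\}$ of ${}_S\tau_S$ coincides with that of $\tau_{S\otimes S^{op}}$; and since $M\mapsto S\otimes_R M\otimes_R S$ is the composite of the exact functors $S\otimes_R-$ (because $S$ is right $R$-flat) and $-\otimes_R S$ (because $S$ is left $R$-flat), the bimodule $S\otimes_{\Zset}S^{op}$ is flat as a left $R\otimes_{\Zset}R^{op}$-module, so that $\tau_{S\otimes S^{op}}$ is itself a hereditary torsion theory.

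To complete (1) I would identify this torsion class with that of the torsion theory induced by ${}_S\tau$ and $\tau_S$. By the description of induced torsion theories recalled in Section~\ref{section_symmetric_quotients}, this comes down to the equivalence
\[S\otimes_R M\otimes_R S=0\iff S\otimes_R M=0\ \text{and}\ M\otimes_R S=0.\]
The implications from $S\otimes_R M=0$, or from $M\otimes_R S=0$, to $S\otimes_R M\otimes_R S=0$ are trivial; the content is the converse, and this is where the two-sidedness of the flatness hypothesis has to be invoked: from $S\otimes_R M\otimes_R S=0$ one must deduce that the left $S$-module $S\otimes_R M$, whose tensor product over $R$ with $S$ on the right is zero, already vanishes, and symmetrically for $M\otimes_R S$. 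I expect this converse implication to be the main obstacle of the proposition.

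Granting (1), parts (2) and (3) are bookkeeping. For (2): the equality ${}_S\te_S(M)={}_S\te(M)\cap\te_S(M)$ is the general identity ${}_l\te_r(M)=\te_l(M)\cap\te_r(M)$ for the induced theory, while ${}_S\te_S(M)=\ker(i_M\colon M\to S\otimes_R M\otimes_R S)$ is Lemma~\ref{properties_of_onesided_tensor}(1) applied to $f\otimes f^{op}$, transported through the isomorphism of (1) and the left $R\otimes_{\Zset}R^{op}$-flatness of $S\otimes_{\Zset}S^{op}$ noted above. For (3): by (1)--(2) an ideal $K$ of $R$ lies in ${}_S\ef_S$ precisely when $R/K$ is ${}_S\tau_S$-torsion, i.e.\ $S\otimes_R(R/K)\otimes_R S=0$; I would translate this vanishing, using $f(1_R)=1_S$, into the condition $Sf(K)S=S$, which is the first description; the second is the same statement read inside $S\otimes_{\Zset}S^{op}$ via the isomorphism of (1); and the third is the literal unwinding of the fact that ${}_S\ef_S$ is generated by the ideals $J\otimes R^{op}+R\otimes I$ with $I$ a left ideal satisfying $Sf(I)=S$ and $J$ a right ideal satisfying $f(J)S=S$.

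For (4): if $f$ is a monomorphism, Lemma~\ref{properties_of_onesided_tensor}(3) and its left analogue say that every right $R$-flat bimodule is $\tau_S$-torsion-free and every left $R$-flat bimodule is ${}_S\tau$-torsion-free; hence for $M$ flat on both sides ${}_S\te_S(M)={}_S\te(M)\cap\te_S(M)\subseteq\te_S(M)=0$, and taking $M=R$ shows ${}_S\tau_S$ is faithful. For (5): the bimodule $N=S\otimes_R M\otimes_R S$ is in particular a right $S$-module, and a right $S$-module $P$ is $\tau_S$-torsion-free because the canonical map $P\to P\otimes_R S$ is split by the projection $P\otimes_R S\twoheadrightarrow P\otimes_S S\cong P$; thus $\te_S(N)=0$, whence ${}_S\te_S(N)={}_S\te(N)\cap\te_S(N)=0$, i.e.\ $N$ is ${}_S\tau_S$-torsion-free.
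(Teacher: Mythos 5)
Your framework is the right one and largely matches the paper's: the identification $M\otimes_{R\otimes_{\Zset}R^{op}}(S\otimes_{\Zset}S^{op})\cong S\otimes_R M\otimes_R S$, the flatness of $S\otimes_{\Zset}S^{op}$ over $R\otimes_{\Zset}R^{op}$ via the composite of exact functors, and the consequent identification of $_S\tau_S$ with $\tau_{S\otimes S^{op}}$ are all correct, and parts (3) and (4) do follow by the bookkeeping you describe. But there is a genuine gap exactly where you flag one: you never prove that $S\otimes_R M\otimes_R S=0$ implies $S\otimes_R M=0$ and $M\otimes_R S=0$ (equivalently, $\ker i_M\subseteq\;_S\te(M)\cap\te_S(M)$); you only state that "one must deduce" it and that you "expect this converse implication to be the main obstacle." It is the main obstacle --- it is the entire mathematical content of parts (1) and (2) --- and the paper's proof is devoted almost wholly to it: one first shows $_S\te(M\otimes_R S)=\;_S\te(M)\otimes_R S$ and $\te_S(S\otimes_R M)=S\otimes_R\te_S(M)$ by comparing the four-term exact sequences of Lemma \ref{properties_of_onesided_tensor}(2) with their flat-tensored versions, deduces $_S\te(\te_S(M))=\te_S(\;_S\te(M))$, and then chases a large commutative diagram to show that any $m\in\ker i_M$ has $m\otimes 1$ lying in $\;_S\te(M)\otimes_R S$ and in fact in the image of $\;_S\te(M)$, whence $1\otimes m=0$ and, symmetrically, $m\otimes 1=0$. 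Note also that the route you sketch --- "the left $S$-module $S\otimes_R M$ has $(S\otimes_R M)\otimes_R S=0$, hence vanishes" --- is not a one-liner: a right $R$-module killed by $-\otimes_R S$ is merely $\tau_S$-torsion, not zero, so carrying a left $S$-structure must be exploited by a real argument, not just cited.

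Two smaller dependencies to be aware of. Your part (2) invokes the identity $_l\te_r(M)=\te_l(M)\cap\te_r(M)$ for the \emph{induced} theory, which is legitimate only after (1) identifies $_S\tau_S$ with the induced theory --- so (2), and hence (3), inherit the gap. Your part (5) as written also leans on (2) (you use $_S\te_S(N)=\;_S\te(N)\cap\te_S(N)$); it can be made independent --- your splitting argument shows $N\to N\otimes_R S$ is injective, and the same argument on the left shows $N\otimes_R S\to S\otimes_R N\otimes_R S$ is injective, so $\ker i_N=0$ directly --- and in that form it is actually a cleaner proof of (5) than the paper's.
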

\begin{proof}
(1) + (2) To prove that $_S\tau_S$ is induced by $_S\tau$ and $\tau_S,$ it is sufficient to prove that $_S\te_S(M)=\;_S\te(M)\cap\te_S(M)$ for all $R$-bimodules $M.$ If $m\in M$ is in $_S\te(M)\cap\te_S(M),$ then $1\otimes m=0$ and $m\otimes 1=0.$ Then $1\otimes m\otimes 1=0$ so $m\in\;_S\te_S(M).$

To prove the converse, let us first note that $_S\te(M\otimes_R S)=\;_S\te(M)\otimes_R S.$ This can be seen from the following commutative diagram.
{\scriptsize
\begin{diagram}
0 & \rTo & _S\te(M\otimes_R S) && \rTo & M\otimes_R S && \rTo & S\otimes_R (M\otimes_R S)\\
& & \dTo_{=} && & \dTo_{=} & && \dTo_{\cong}\\
0 & \rTo & _S\te(M)\otimes_R S && \rTo & M\otimes_R S && \rTo & (S\otimes_R M)\otimes_R S
\end{diagram}
}

The bottom row is obtained by tensoring $0  \rightarrow\; _S\te(M)\rightarrow  M \rightarrow S\otimes_R M$ with the left $R$-flat module $S$ from the right. The top row is exact by part (2) of Lemma \ref{properties_of_onesided_tensor}. Identifying the modules $S\otimes_R (M\otimes_R S)$ and $(S\otimes_R M)\otimes_R S,$ we obtain the above commutative diagram. From the diagram, the first vertical map is the identity.

Similarly, we obtain the equality $\te_S(S\otimes_R M)=S\otimes_R \te_S(M).$

Applying the left exact functor $_S\te(\underline{\hskip.4cm}),$ to the exact sequence $0  \rightarrow\; \te_S(M)\rightarrow  M \rightarrow M\otimes_R S$ we obtain the exact sequences
$0  \rightarrow\; _S\te(\te_S(M))\rightarrow\; _S\te(M) \rightarrow\; _S\te(M\otimes_R S).$ This produces the following commutative diagram.
{\scriptsize
\begin{diagram}
0 & \rTo & _S\te(\te_S(M)) && \rTo & _S\te(M) && \rTo &  _S\te(M\otimes_R S)\\
& & \dTo_{=} && & \dTo_{=} && & \dTo_{=} \\
0 & \rTo & \te_S(\;_S\te(M)) && \rTo & _S\te(M) && \rTo & _S\te(M)\otimes_R S
\end{diagram}
}

The bottom row is exact by part (2) of Lemma \ref{properties_of_onesided_tensor}. The last vertical equality follows from the previous commutative diagram and the first vertical equality follows then from the fact that the last two vertical maps are equalities.

Consider now the following diagram.
{\scriptsize
\begin{diagram}
0 & & 0 & & 0 & & 0 & &  & &\\
&\rdTo&\dTo & & \dTo & & \dTo &  & &  & \\
0 & \rTo & _S\te(\te_S(M)) & \rTo & _S\te(M) & \rTo & _S\te(M)\otimes_R S & \rTo & _S\te(M)\otimes_R S/f(R) & \rTo & 0\\
&&\dTo& \rdTo& \dTo & & \dTo &  &  &  & \\
0 & \rTo & \te_S(M) & \rTo & M & \rTo & M\otimes_R S & \rTo & M\otimes_R S/f(R) & \rTo & 0\\
&&\dTo& & \dTo & \rdTo & \dTo&  & \dTo &  & \\
0 & \rTo & S\otimes_R \te_S(M) & \rTo & S\otimes_R M & \rTo & S\otimes_R M\otimes_R S & \rTo & S\otimes_R M\otimes_R S/f(R) & \rTo & 0\\
&&\dTo & & \dTo & & \dTo &  & \dTo &  & \\
& & S/f(R)\otimes_R \te_S(M) &  & S/f(R)\otimes_R M & \rTo & S/f(R)\otimes_R M\otimes_R S & \rTo & S/f(R)\otimes_R M\otimes_R S/f(R) & \rTo & 0\\
&&\dTo & & \dTo & & \dTo & & \dTo & & \\
&&0 & & 0 & & 0 & & 0 & & \\
\end{diagram}
}

The first two nonzero columns and rows are exact by part (2) of Lemma \ref{properties_of_onesided_tensor}. The third nonzero column and row are exact by left and right flatness of $S$.
The last nonzero row and column are exact because the functors $S/f(R)\otimes_R\underline{\hskip.4cm}$ and $\underline{\hskip.4cm}\otimes_R S/f(R)$ are right exact.

The top left square is commutative since $_S\te(\te_S(M))=\te_S(\;_S\te(M)).$ Thus, we have that all horizontal and vertical maps commute.

Now let us consider the diagonal maps. The map $_S\te(\te_S(M))\rightarrow M$ is injective as it is an inclusion. The image of this map is contained in the kernel of $i_M: M\rightarrow S\otimes_R M\otimes_R S$ as $_S\te(\te_S(M))\subseteq\; _S\te(M)\cap \te_S(M)\subseteq \ker i_M= \,_S\te_S(M).$ To prove the converse, let $m\in\ker i_M.$ Consider $m\otimes 1\in M\otimes_R S .$ As $m\otimes 1$ maps to 0 by the map $M\otimes_R S\rightarrow S\otimes_R M\otimes_R S,$ $m\otimes_R 1$ is in $_S\te(M)\otimes_R S.$ $m\otimes 1$ maps to 0 under map $_S\te(M)\otimes_R S \rightarrow\; _S\te(M)\otimes_R S/R$ and so it is in image of $_S\te(M) \rightarrow\; _S\te(M)\otimes_R S.$ As the vertical map $_S\te(M)\rightarrow M$ is inclusion and the maps in the second square in the first row commute, $m$ is in $_S\te(M).$ But then, $1\otimes m=0.$ Symmetrically, we obtain that $m\otimes 1=0.$ This proves the exactness of the diagonal maps and that $_S\te(\te_S(M))=\, _S\te_S(M).$ As
we showed that $_S\te(M)\cap\te_S(M)\subseteq\; _S\te_S(M)$ and $_S\te(\te_S(M))\subseteq\; _S\te(M)\cap\te_S(M),$ we obtain that $_S\te(M)\cap\te_S(M)=\,_S\te_S(M).$

If we identify the right action of $R\otimes R^{op}$ with the $R$-bimodule action, we can identify $S\otimes_R M\otimes_R S$ with $M\otimes_{R\otimes R^{op}}(S\otimes S^{op}).$ Then $_S\te_S(M)=\ker(i_M: M\rightarrow S\otimes_R M\otimes_R S)=\ker(M\rightarrow M\otimes_{R\otimes R^{op}}(S\otimes S^{op}))=\te_{S\otimes S^{op}}(M)$ so $_S\tau_S =\tau_{S\otimes S^{op}}.$ This proves parts (1) and  (2) of the proposition.

(3) To prove the first two equalities, note that an ideal $K$ of $R$ is in $_S\ef_S$ if and only if $R/K$ is in $_S\te_S.$ By parts (1) and (2), this is equivalent with $S\otimes_R R/K \otimes_R S=0$ which, in turn, is equivalent to $R/K\otimes_{R\otimes R^{op}}(S\otimes S^{op})=0.$ Since $S$ is left and right $R$-flat, $S\otimes S^{op}$ is flat as left $R\otimes R^{op}$-module. Thus $R/K\otimes_{R\otimes R^{op}}(S\otimes S^{op})=0$ is equivalent to $K\otimes_{R\otimes R^{op}}(S\otimes S^{op})=S\otimes S^{op}$ which, in turn, is equivalent to $f(K)(S\otimes S^{op})=S\otimes S^{op}.$ But this last condition is equivalent to $Sf(K)S=S.$ Thus, $K\in\;_S\ef_S$ iff $S=Sf(K)S$ iff $f(K)(S\otimes S^{op})=S\otimes S^{op}.$ The last set in the claim describes the filter induced with $\ef_S$ and $_S\ef.$ But as $_S\tau_S$ is the torsion theory induced by $\tau_S$ and $_S\tau,$ the last equality follows.

(4) If $f$ is a monomorphism and $M$ is an $R$-bimodule that is left and right flat, then $_S\te(M)=\te_S(M)=0$ by part (3) of Lemma \ref{properties_of_onesided_tensor}. Thus, $_S\te_S(M)=\te_S(M)\cap\;_S\te(M)=0.$ In particular, $R$ is $_S\tau_S$-torsion-free.

(5) Tensoring the exact sequence $0\rightarrow\, _S\te_S(M)\rightarrow M\rightarrow S\otimes_R M\otimes_R S$ with $S$ from both sides, we obtain $0\rightarrow0\rightarrow S\otimes_R M\otimes_R S\rightarrow S\otimes_R(S\otimes_R M\otimes_R S)\otimes_R S.$ Thus, $S\otimes_R M\otimes_R S$ embeds in $S\otimes_R(S\otimes_R M\otimes_R S)\otimes_R S$ and so it is torsion-free.
\end{proof}

For an $R$-bimodule $M,$ it readily follows that $_lM_r$ is a right $_l(R\otimes R^{op})_r$-module.
Unfortunately, some additional conditions on $M$ need to be assumed in order to obtain the $_lR_r$-bimodule (right $_lR_r\otimes\,_lR_r^{op}$) structure. If $R$ and $S$ are two rings, $\ef_l$ and $\ef_r$ Gabriel filters of right and left ideals for $R,$  $\eg_l$ and $\eg_r$ Gabriel filters of right and left ideals for $S,$ and $M$ an $R$-$S$-bimodule, Ortega shows that   $_{\ef_l}M_{\eg_r}$ is a $_{\ef_l}R_{\ef_r}$-$_{\eg_l}S_{\eg_r}$-bimodule if the following fours conditions hold (see Lemmas 2.1 and 2.2 in \cite{Ortega_paper}  or Lemmas 4.47 and 4.48 in \cite{Ortega_thesis} for details).
\begin{itemize}
\item[Q1] $M$ is a $\eg_r$-torsion-free right $R$-module.

\item[Q2] $M$ is a $\ef_l$-torsion-free left $R$-module.

\item[Q3] For every $J\in\ef_r,$ $M/JM$ is $\eg_r$-torsion right $R$-module.

\item[Q4] For every $I\in\eg_l,$ $M/MI$ is $\ef_l$-torsion left $R$-module.
\end{itemize}

The following proposition shows that the bimodules satisfying the above four conditions have a symmetric version of the map $\Theta_M.$
\begin{proposition}
If $_lR_r$ is a symmetric ring of quotients with respect to the filter induced by a left filter $\ef_l$ and a right filter $\ef_r,$ and $M$ an $R$-bimodule that satisfies $Q1$--$Q4$, then there is a commutative diagram
{\scriptsize
\begin{diagram}[tight]
M & \rTo^{i_M} & & _{\ef_l}R_{\ef_r}\otimes_R M\otimes_R\; _{\ef_l}R_{\ef_r} \\
\dTo^{q_M} &    \ldTo_{\Theta_M}& & \\
_{\ef_l}M_{\ef_r}& & & \\
\end{diagram}
}
where the horizontal map is the natural $R$-bimodule map $i_M,$ the vertical map is the $R$-bimodule map $q_M,$ and the  diagonal map $\Theta_M$ is the $_lR_r$-bimodule map defined by $\Theta_M(s\otimes m\otimes t)=sq_M(m)t.$ $\Theta_M$ is unique $_lR_r$-bimodule map with $q_M=\Theta_M i_M.$
\label{Map_Theta}
\end{proposition}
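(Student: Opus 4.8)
The plan is to construct $\Theta_M$ via the universal property of the triple tensor product ${}_{\ef_l}R_{\ef_r}\otimes_R M\otimes_R\,{}_{\ef_l}R_{\ef_r}$ and then to read off the bimodule-linearity, the commutativity of the triangle, and the uniqueness directly from its defining formula. The input I would first assemble is the following: by $Q1$--$Q4$ together with Ortega's Lemmas 2.1 and 2.2 in \cite{Ortega_paper} (applied with $S=R$, $\eg_l=\ef_l$, $\eg_r=\ef_r$), the module $_lM_r$ is an $_lR_r$-bimodule whose $_lR_r$-action restricts along the canonical ring homomorphism $q_R:R\to\,_lR_r$ to the original $R$-bimodule structure of $_lM_r$. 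Since $q_M:M\to\,_lM_r$ is an $R$-bimodule map by Lemma \ref{properties_of_module_of_quot}, this gives the identity $q_M(rms)=q_R(r)\,q_M(m)\,q_R(s)$ for all $r,s\in R$ and $m\in M$, the right-hand side being computed in the $_lR_r$-bimodule $_lM_r$.

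Next I would check that the assignment $(s,m,t)\mapsto s\,q_M(m)\,t$ on ${}_lR_r\times M\times{}_lR_r$ induces a well-defined additive map $\Theta_M$ on the triple tensor product. It is additive in each argument, so only the two balancing relations have to be verified; recalling that the $R$-action on each copy of $_lR_r$ is via $q_R$, moving $r\in R$ from the first factor across $m$ sends $s\,q_R(r)\,q_M(m)\,t$ to $s\,q_M(rm)\,t$ by the displayed identity and associativity of the $_lR_r$-action, and symmetrically on the other side. This is the step I expect to need the most care, and it is exactly where $Q1$--$Q4$ enter: without them $_lM_r$ is not known to be an $_lR_r$-bimodule, so the product $s\,q_M(m)\,t$ --- and hence the formula for $\Theta_M$ --- would not even be defined.

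The remaining assertions are then formal. For $a,b\in{}_lR_r$ one has $\Theta_M\big(a(s\otimes m\otimes t)b\big)=(as)\,q_M(m)\,(tb)=a\,\Theta_M(s\otimes m\otimes t)\,b$ on simple tensors, so $\Theta_M$ is an $_lR_r$-bimodule map. Since $i_M(m)=1\otimes m\otimes 1$ and $1=q_R(1_R)$ is the identity of $_lR_r$, we get $\Theta_M i_M(m)=q_M(m)$, which is the commutativity of the triangle. Finally, every element of ${}_lR_r\otimes_R M\otimes_R{}_lR_r$ is a finite sum of simple tensors and $s\otimes m\otimes t=s\cdot i_M(m)\cdot t$, so the image of $i_M$ generates the triple tensor product as an $_lR_r$-bimodule; hence any $_lR_r$-bimodule map $\Psi$ with $\Psi i_M=q_M$ satisfies $\Psi(s\otimes m\otimes t)=s\,q_M(m)\,t=\Theta_M(s\otimes m\otimes t)$ on simple tensors and therefore equals $\Theta_M$.
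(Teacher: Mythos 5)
Your proposal is correct and follows essentially the same route as the paper's proof: both construct $\Theta_M$ from the middle tri-linear map $(s,m,t)\mapsto s\,q_M(m)\,t$, using that $q_M$ is an $R$-bimodule map and that $Q1$--$Q4$ (via Ortega's lemmas) make $_lM_r$ an $_lR_r$-bimodule so that the expression $s\,q_M(m)\,t$ is defined, and then read off commutativity and uniqueness from generation by simple tensors. You simply spell out the balancing relations and the uniqueness argument that the paper leaves as ``easy to see.''
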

\begin{proof}
As $q_M$ is an $R$-bimodule map (see paragraph preceding Lemma \ref{properties_of_module_of_quot}), the mapping
$_lR_r\times M\times\; _lR_r\rightarrow \, _lM_r$ given by $(s, m, t)\mapsto s q_M(m) t$ is middle tri-linear. The term $s q_M(m) t$ is well defined if $M$ satisfies $Q1$-$Q4.$ Thus, this mapping defines a $_lR_r$-bimodule mapping $\Theta_M:\,_lR_r\otimes_R M\otimes_R\, _lR_r\rightarrow\,_lM_r$ such that $\Theta_M: s\otimes m\otimes t\mapsto s q_M(m) t.$ In this case $q_M=\Theta_M\circ i_M$ by definition and it is easy to see that $\Theta_M$ is unique $_lR_r$-bimodule map with this property.
\end{proof}

\section{Perfect symmetric ring of quotients}
\label{section_symmetric_perfect}

In this section, we prove the symmetric versions of Theorems \ref{PerfectQuotient} and \ref{perfect_filter}.

\begin{theorem} For a ring homomorphism $f:R\rightarrow S$ the following conditions are equivalent.
\begin{enumerate}
\item $f$ is a ring epimorphism and makes $S$ into a flat left and a flat right
$R$-module.

\item The family of left ideals $\ef_l=\{I | Sf(I)=S\}$ is a
left Gabriel filter, the family of right ideals $\ef_r=\{J | f(J)S=S\}$ is a right
Gabriel filter and there is a ring isomorphism $g: S\cong\; _{\ef_l}R_{\ef_r}$ such that
$g\circ f$ is the canonical map $q_R: R\rightarrow\; _{\ef_l}R_{\ef_r}.$

\item The following conditions are satisfied.
\begin{itemize}
\item[i)] For every $s\in S,$ there are $r_i\in R$ and $s_i\in S,$ $i=1,\ldots,n$ and
$r_j\in R$ and $s_j\in S,$ $j=1,\ldots,m$
such that $f(r_i)s\in f(R)$ for all $i,$ $sf(r_j)\in f(R)$ for all $j,$ $\sum_{i=1}^n s_if(r_i)=1,$ and $\sum_{j=1}^m  f(r_j)s_j=1.$

\item[ii)] If $f(r)=0,$ then there are $r_i\in R$ and $s_i\in S,$ $i=1,\ldots,n$ and $r_j\in R$ and $s_j\in S,$ $j=1,\ldots,m$ such that $r_ir=0$ for all $i,$ $rr_j=0$ for all $j,$ $\sum_{i=1}^n s_if(r_i)=1,$  and $\sum_{j=1}^m f(r_j)s_j=1.$
\end{itemize}
\end{enumerate}
\label{PerfectSymmetricQuotient}
\end{theorem}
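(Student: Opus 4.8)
\emph{Approach.} Everything will be deduced from the one-sided Theorem \ref{PerfectQuotient}, applied both to $f\colon R\to S$ and to $f$ regarded as a ring homomorphism $R^{op}\to S^{op}$, together with the description of ${}_{\ef_l}R_{\ef_r}$ by compatible pairs. I would prove $(1)\Leftrightarrow(3)$ first, then $(1)\Rightarrow(2)$, and finally $(2)\Rightarrow(3)$, closing the cycle. For $(1)\Leftrightarrow(3)$: being a ring epimorphism is a left--right symmetric condition, so $f$ is a ring epimorphism $R\to S$ iff it is one $R^{op}\to S^{op}$, and $S$ is a flat right $R$-module iff $S^{op}$ is a flat left $R^{op}$-module; hence $(1)$ is equivalent to the conjunction of ``$f\colon R\to S$ is a ring epimorphism making $S$ a flat left $R$-module'' and ``$f\colon R^{op}\to S^{op}$ is a ring epimorphism making $S^{op}$ a flat left $R^{op}$-module''. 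Applying $(1)\Leftrightarrow(3)$ of Theorem \ref{PerfectQuotient} to $f\colon R\to S$ produces exactly the conditions of $(3)$ involving $sf(r_j)\in f(R)$, $\sum_j f(r_j)s_j=1$ and $rr_j=0$; applying it to $f\colon R^{op}\to S^{op}$ and rewriting the products in $S^{op}$ as products in $S$ produces the remaining ones, involving $f(r_i)s\in f(R)$, $\sum_i s_if(r_i)=1$ and $r_ir=0$. Thus $(1)\Leftrightarrow(3)$.

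\emph{$(1)\Rightarrow(2)$.} Assume $(1)$, hence also $(3)$, and $f(1_R)=1_S$. By Theorem \ref{PerfectQuotient} applied to $f$ and to $f\colon R^{op}\to S^{op}$, $\ef_r=\{J\mid f(J)S=S\}$ is a right Gabriel filter, $\ef_l=\{I\mid Sf(I)=S\}$ is a left Gabriel filter, and there are ring isomorphisms $R_{\ef_r}\cong S$ and ${}_{\ef_l}R\cong S$ taking the one-sided localization maps to $f$; in particular $\ker f=\te_r(R)=\te_l(R)$, so $\ker f={}_l\te_r(R)=\te_l(R)\cap\te_r(R)$ and $R/{}_l\te_r(R)=R/\te_r(R)$ is a $\tau_r$-torsion-free right $R$-module. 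Let $\pi_r\colon{}_{\ef_l}R_{\ef_r}\to R_{\ef_r}$ send the class of a compatible pair $(g,h)$ (with $g\colon I\to R/{}_l\te_r(R)$, $I\in\ef_l$, and $h\colon J\to R/{}_l\te_r(R)$, $J\in\ef_r$) to the class of $h$; by the construction of the ring structure on ${}_{\ef_l}R_{\ef_r}$ (\cite{Ortega_paper}) this is a ring homomorphism, and $\pi_r\circ q_R=q^{(r)}_R$ (the one-sided localization map) since $q_R(r)$ is the class of the pair $(x\mapsto\overline{xr},\,x\mapsto\overline{rx})$. I claim $\pi_r$ is bijective, which, composed with $R_{\ef_r}\cong S$, furnishes the isomorphism of $(2)$, compatible with $q_R$. \emph{Injectivity}: if $\pi_r([(g,h)])=0$ then $h$ vanishes on some $J'\in\ef_r$, so $g(i)j=ih(j)=0$ for all $i\in I$, $j\in J\cap J'\in\ef_r$; thus each $g(i)\in R/\te_r(R)$ is annihilated by a member of $\ef_r$, hence $g(i)=0$ by torsion-freeness, so $[(g,h)]=0$. \emph{Surjectivity}: given $s\in S\cong R_{\ef_r}$, the corresponding element of $R_{\ef_r}$ is represented by $h\colon J\to R/\te_r(R)$ with $J=\{r\mid sf(r)\in f(R)\}\in\ef_r$ and $\overline f(h(r))=sf(r)$, where $\overline f\colon R/\ker f\hookrightarrow S$ is induced by $f$; by $(3)$(i) the left ideal $I=\{r\mid f(r)s\in f(R)\}$ lies in $\ef_l$, and $g\colon I\to R/\ker f$, $g(r)=$ the unique class with $\overline f(g(r))=f(r)s$, is a left $R$-homomorphism with $g(i)j=ih(j)$ (apply the injection $\overline f$ to both sides, both yielding $f(i)sf(j)$). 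Hence $(g,h)$ is a compatible pair and $\pi_r([(g,h)])=s$.

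\emph{$(2)\Rightarrow(3)$.} Suppose $g\colon S\cong{}_{\ef_l}R_{\ef_r}$ with $g\circ f=q_R$. Given $s\in S$, write $g(s)$ as the class of a compatible pair $(a,b)$ with $a\colon I\to R/{}_l\te_r(R)$, $I\in\ef_l$, and $b\colon J\to R/{}_l\te_r(R)$, $J\in\ef_r$. Since $J\in\ef_r$, $f(J)S=S$, i.e.\ $1=\sum_j f(r_j)s_j$ with $r_j\in J$; since $b$ is defined on $J$, for every $r\in J$ the product $g(s)q_R(r)=g(sf(r))$ lies in $q_R(R)=g(f(R))$, so $sf(r)\in f(R)$, in particular $sf(r_j)\in f(R)$. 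Symmetrically, $I\in\ef_l$ gives $Sf(I)=S$, i.e.\ $1=\sum_i s_if(r_i)$ with $r_i\in I$, and $f(r)s\in f(R)$ for all $r\in I$; this is $(3)$(i). If moreover $f(r)=0$, then $q_R(r)=g(f(r))=0$, so $r\in\ker q_R={}_l\te_r(R)=\te_l(R)\cap\te_r(R)$; hence $rJ'=0$ for some $J'\in\ef_r$ and $I'r=0$ for some $I'\in\ef_l$, and expanding $1\in f(J')S$ and $1\in Sf(I')$ yields $r_j\in J'$, $s_j$ and $r_i\in I'$, $s_i$ with $rr_j=r_ir=0$ and $\sum_j f(r_j)s_j=\sum_i s_if(r_i)=1$; this is $(3)$(ii). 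Together with $(1)\Leftrightarrow(3)$ this completes the cycle.

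\emph{Main obstacle.} I expect $(1)\Rightarrow(2)$ to be the delicate step, namely identifying the genuinely two-sided localization ${}_{\ef_l}R_{\ef_r}$ with $S$. Two issues must be dealt with: $f$ need not be injective, which forces one to work modulo ${}_l\te_r(R)=\ker f$ and to use the injectivity of the induced map $R/\ker f\hookrightarrow S$; and the passage between the one-sided and two-sided rings of quotients, handled by proving that the projection $\pi_r$ onto the right component is bijective --- injectivity rests on $\tau_r$-torsion-freeness of $R/\te_r(R)$, whereas surjectivity is precisely where the full two-sided hypothesis $(3)$(i) is needed, to build the missing left component $g$ of a compatible pair $(g,h)$.
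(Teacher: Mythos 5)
Your proposal is correct, and while the overall scaffolding (reduce everything possible to Theorem \ref{PerfectQuotient}, use Ortega's compatible-pair description of $_{\ef_l}R_{\ef_r}$) matches the paper, the key implication $(1)\Rightarrow(2)$ is handled by a genuinely different argument. The paper first proves the bimodule isomorphism $_lM_r\cong\,_l(S\otimes_R M\otimes_R S)_r$ for \emph{every} $R$-bimodule $M$ (via the four-term exact sequence and torsion-freeness of $S\otimes_R M\otimes_R S$), specializes to $M=R$ to get $_lR_r\cong\,_lS_r$, and then shows $q_S:S\to\,_lS_r$ is onto by proving every compatible pair of maps into $S$ is $(R_s,L_s)$ for a single $s\in S$; you instead compare $_{\ef_l}R_{\ef_r}$ directly with the one-sided $R_{\ef_r}$ via the projection $\pi_r$ onto the right component of a compatible pair, proving $\pi_r$ bijective (injectivity from $\tau_r$-torsion-freeness of $R/\ker f$, surjectivity by manufacturing the missing left component from condition (3)(i)). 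Your route is shorter and isolates a nice structural fact --- in the perfect case the symmetric localization coincides with each one-sided one via forgetting a component --- but it rests on the assertion, cited rather than checked, that $\pi_r$ respects Ortega's multiplication on $_{\ef_l}R_{\ef_r}$; that does follow from how Schelter and Ortega define the product of compatible pairs, but it deserves a line of verification. The paper's longer detour through $_lM_r\cong\,_l(S\otimes_R M\otimes_R S)_r$ pays for itself later, since that isomorphism is reused verbatim in the proof of Theorem \ref{perfect_symmetric_filter}. Your observation that $(1)\Leftrightarrow(3)$ is literally the conjunction of the one-sided equivalence for $f:R\to S$ and for $f:R^{op}\to S^{op}$ is cleaner than the paper's $(3)\Rightarrow(1)$, which re-runs Stenstr\"om's argument; and your $(2)\Rightarrow(3)$ coincides with the paper's.
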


\begin{proof}
(1) $\Rightarrow$ (2) If (1) holds, then $S$ is also a perfect left ring of quotients with respect to perfect Gabriel filter $\ef_l$ by Theorem \ref{PerfectQuotient}. Similarly, $\ef_r$ is a right Gabriel filter. The filter of two-sided ideals of $R$ induced by $\ef_l$ and $\ef_r$ is the set of $R$-ideals such that $Sf(K)S=S$ by part (3) of Proposition \ref{symmetric_tensor}.

Let $M$ be any $R$-bimodule. We show that
\begin{equation}
_lM_r  \cong\, _l(S\otimes_R M\otimes_R S)_r
\end{equation}
To prove this isomorphism, consider the following exact sequence.
\begin{diagram}[tight]
0&\rTo&_S\te_S(M)&\rTo& M&\rTo &S\otimes_R M\otimes_R S&\rTo&& \coker(i_M)&\rTo& 0
\end{diagram}
Note that the middle map $i_M$ is given by $M\cong R\otimes_R M\otimes_R R\stackrel{f\otimes 1\otimes f}{\longrightarrow}
S\otimes_R M\otimes_R S.$ Tensoring the exact sequence from both sides with $S,$ we obtain
\[0\rightarrow S\otimes_R M\otimes_R S\rightarrow S\otimes_RS\otimes_R M\otimes_R S\otimes_R S\rightarrow S\otimes_R\coker(i_M)\otimes_R S\rightarrow 0.\] The first nonzero map is bijective as $f$ is epimorphism and so
$S\otimes_R\coker(i_M)\otimes_R S=0$. Thus, $\coker(i_M)$ is a $_S\tau_S$-torsion module and so $_l(\coker(i_M))_r=0.$

As $_l(\coker(i_M))_r=0$ and $_l(i_M(M))_r=\,_l(M/_S\te_S(M))_r=\,_l(M)_r$ by part (1) of Lemma \ref{properties_of_module_of_quot},
applying the left exact functor $q$ on the exact sequence $0\rightarrow\; _S\te_S(M)\rightarrow M \rightarrow S\otimes_R M\otimes_R S\rightarrow\coker(i_M)\rightarrow0$ produces an isomorphism $q(i_M):\,_l(M)_r\cong\,_l(S\otimes_R M\otimes_R S)_r.$

The isomorphism (4.1)
asserts that $_lR_r\cong\,_l(S\otimes_R R\otimes_R S)_r\cong\,_l(S\otimes_R S\otimes_R S)_r\cong\,_lS_r.$ The middle map is an isomorphism as $f$ is a ring epimorphism. Thus $_lR_r\cong\,_lS_r$ via $q(f)$ since the composition $R\rightarrow S\otimes_R R\otimes_R S\rightarrow S\otimes_R S\otimes_R S\rightarrow S$ is $f.$

As $S\cong S\otimes_R S\otimes_R S$, the $_S\tau_S$-torsion submodule of $S$ is 0 so $S$ is $_S\tau_S$-torsion-free. Thus $q_S: S\rightarrow\,_lS_r$ is an embedding. We shall show that it is onto. Since $S$ is right perfect, the proof of the right-sided version of the theorem (\cite{Stenstrom}, top of p. 228) establishes that every right $R$-homomorphisms $\beta: J\rightarrow S,$ $J\in\ef_r$ can be extended to $R.$ Thus, $\beta$ is the left multiplication $L_t$ by some $t\in S.$ Similarly, every left $R$-homomorphism $\alpha:I\rightarrow S,$ $I\in\ef_l$ can be extended to $R.$ Thus, $\alpha$ is the right multiplication $R_s$ by some $s\in S.$ If $\alpha$ and $\beta$ are compatible, $isj=R_s(i)j=\alpha(i)j=i\beta(j)=iL_t(j)=itj$ for every $i\in I$ and $j\in J.$ Thus, the left multiplication by $is-it$ is a right $R$-map that maps $J$ onto 0 and so it factors to a map $R/J\rightarrow S.$ But as $S$ is $\tau_S$-torsion-free ($S$ embeds into $S\otimes_R S$), there are no nonzero maps from $\tau_S$-torsion modules into $S.$ Hence, $is-it=0.$ As this holds for every $i\in I,$ the right multiplication by $s-t$ is a left $R$-map that maps $I$ onto 0 and so it factors to a map $R/I\rightarrow S.$ But as $S$ is $_S\tau$-torsion-free, it is zero on entire $R$. Thus $s=t.$

This asserts that every pair of compatible homomorphisms $(\alpha, \beta)$ from $_lS_r$ is of the form $(R_s, L_s)$ for some $s\in S.$ Because of this, the map $q_S$ is onto and so it is an isomorphism.

Thus, we obtain the diagram below with $q_S$ and $q(f)$ being isomorphisms.
{\scriptsize
\begin{diagram}
R & \rTo^{f} & S\\
\dTo^{q_R} & & \dTo_{q_S}\\
_lR_r & \rTo^{q(f)} & _lS_r
\end{diagram}
}

Defining $g=q(f)^{-1}\circ q_S$ we obtained the desired isomorphism.

(2) $\Rightarrow$ (3) For $s\in S\cong\, _lR_r,$ there are $I\in\ef_l$ and $J\in \ef_r$ such that $f(I)s\subseteq f(R)$ and $sf(J)\subseteq f(R).$ As $Sf(I)=S$ and $f(J)S=S,$ $1=\sum_{i=1}^n s_if(r_i)=\sum_{j=1}^m f(r_j)s_j$ for some $r_i\in R$ and $s_i\in S,$ $i=1,\ldots,n$ and $r_j\in R$ and $s_j\in S,$ $j=1,\ldots,m.$ As $f(I)s\subseteq f(R)$ and $sf(J)\subseteq f(R),$ $f(r_i)s\in f(R)$ for all $i,$ $sf(r_j)\in f(R)$ for all $j.$

If $f(r)=0,$ then $r\in\, _l\te_r(R).$ Thus, there are $I\in\ef_l$ and $J\in \ef_r$ such that $Ir=0$ and $rJ=0.$ As
$Sf(I)=S$ and $f(J)S=S,$ again $1=\sum_{i=1}^n s_if(r_i)=\sum_{j=1}^m f(r_j)s_j$ for some $r_i\in R$ and $s_i\in S,$ $i=1,\ldots,n$ and $r_j\in R$ and $s_j\in S,$ $j=1,\ldots,m.$ As $Ir=0$ and $rJ=0,$ $r_ir=0$ for all $i,$ and $rr_j=0$ for all $j.$

(3) $\Rightarrow$ (1)
By Lemma 2.3, p. 228 from \cite{Stenstrom}, the right-sided part of condition i) implies $S/f(R)\otimes_R S=0.$ The left-sided part of condition i) ensures that $S\otimes_R S/f(R)=0$ as well. In this case we have that $f$ is a ring epimorphism (see Proposition XI 1.2, p. 226 in \cite{Stenstrom}).

The right-sided part of condition ii) is exactly the criterion for left flatness (see Equational Criteria for Flatness, Theorem 4.24, p. 130 in \cite{Lam}, or \cite{Stenstrom} bottom of p. 228). Thus, $f$ makes $S$ into a left flat $R$-module. Symmetrically, the left-sided part of condition ii) implies that $f$ makes $S$ into a right flat $R$-module.
\end{proof}

\begin{definition}
If $f:R\rightarrow S$ satisfies the equivalent conditions of this
theorem, $S$ is called a {\em perfect symmetric ring of quotients} or a {\em perfect symmetric localization of $R$.}
A hereditary torsion theory $_l\tau_r$ with filter $_l\ef_r$ induced by $\ef_l$ and $\ef_r$ is {\em perfect} if the symmetric ring of quotients $_lR_r$ is
perfect, $\ef_l=\{I| _lR_r q_R(I)=\;_lR_r\}$ and $\ef_r=\{J| q_R(J)\;_lR_r=\;_lR_r\}.$ The Gabriel filter $_l\ef_r$ is called {\em perfect} in this case.
\end{definition}

\begin{lemma} If $f:R\rightarrow S$ is a ring epi that makes $S$ into a perfect symmetric ring of quotients, then there is a ring isomorphism $j: S\cong\,_{\ef_l}R$ where  $\ef_l$ is the filter of left ideals $\{I| Sf(I)=S\}$ and a ring isomorphism $k: S\cong R_{\ef_r}$ where  $\ef_r$ is the filter of right ideals $\{J| f(J)S=S\}.$ $\ef_l$ is a perfect left and $\ef_r$ is a perfect right filter.  The isomorphisms $j$ and $k$ are such that $jf$ is the canonical map $R\rightarrow\,_{\ef_l}R$ and $kf$ is the canonical map $R\rightarrow R_{\ef_r}.$
\label{lemma_perfect}
\end{lemma}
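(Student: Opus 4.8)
The plan is to read the lemma off from Theorem~\ref{PerfectQuotient} and its left-handed analogue, using only the observation that, by Theorem~\ref{PerfectSymmetricQuotient}, the hypothesis that $f$ makes $S$ into a perfect symmetric ring of quotients means precisely that $f$ is a ring epimorphism and $S$ is flat both as a left and as a right $R$-module.

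First I would extract the right-hand data. Since $f$ is a ring epimorphism that makes $S$ flat as a \emph{left} $R$-module, condition (1) of Theorem~\ref{PerfectQuotient} holds, so by the equivalence (1)$\Leftrightarrow$(2) of that theorem the family $\ef_r=\{J\mid f(J)S=S\}$ is a right Gabriel filter and there is a ring isomorphism $k:S\cong R_{\ef_r}$ with $k\circ f=q_R$, the canonical localization map $R\to R_{\ef_r}$. Symmetrically, $f$ is a ring epimorphism that makes $S$ flat as a \emph{right} $R$-module, so the left version of Theorem~\ref{PerfectQuotient} (obtained by interchanging ``left'' and ``right'' throughout) yields that $\ef_l=\{I\mid Sf(I)=S\}$ is a left Gabriel filter together with a ring isomorphism $j:S\cong\,_{\ef_l}R$ satisfying $j\circ f=q_R$. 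These are exactly the filters and the compatibility statements claimed in the lemma.

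It then remains to check that $\ef_r$ — and, by the mirror argument, $\ef_l$ — is a perfect filter in the sense of the paragraph following Theorem~\ref{PerfectQuotient}. The ring $R_{\ef_r}\cong S$ is a perfect right ring of quotients since $f:R\to S$ itself satisfies condition (1) of Theorem~\ref{PerfectQuotient}. For the remaining requirement, namely $\ef_r=\{J\mid q_R(J)R_{\ef_r}=R_{\ef_r}\}$, I would transport the condition across $k$: because $k$ is a ring isomorphism and $q_R=k\circ f$, the equality $q_R(J)R_{\ef_r}=R_{\ef_r}$ holds if and only if $f(J)S=S$, so that displayed set is precisely $\ef_r$ by definition; the argument for $\ef_l$ is identical with the roles of left and right exchanged.

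There is essentially no obstacle: the lemma is a direct transcription of the one-sided results once one notes that a two-sided flat epimorphic extension is in particular a one-sided flat epimorphic extension on each side. The only points requiring a moment's care are invoking the left analogue of Theorem~\ref{PerfectQuotient}, which is not written out in the text but is the routine mirror image, and the identification of the ``perfect filter'' condition with $f(J)S=S$ (respectively $Sf(I)=S$) along the isomorphisms $k$ and $j$.
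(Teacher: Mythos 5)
Your proposal is correct and follows essentially the same route as the paper: both deduce condition (1) of Theorem~\ref{PerfectQuotient} (on each side) from condition (1) of Theorem~\ref{PerfectSymmetricQuotient} and then read the filters, the isomorphisms $j$, $k$, and the compatibility $jf=q_R$, $kf=q_R$ directly off the one-sided equivalence (1)$\Leftrightarrow$(2) and its left analogue. Your explicit verification that $\ef_r=\{J\mid q_R(J)R_{\ef_r}=R_{\ef_r}\}$ by transporting along $k$ is a detail the paper leaves implicit, and it is carried out correctly.
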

\begin{proof}
If $S$ is a perfect ring of quotients, then condition (1) of Theorem \ref{PerfectSymmetricQuotient} implies the condition (1) of Theorem \ref{PerfectQuotient} so $S$ is also a right perfect ring of quotients. Similarly, $S$ is also a perfect left ring of quotients. Condition (2) of Theorem \ref{PerfectSymmetricQuotient} and condition (1) of Theorem \ref{PerfectQuotient} imply that
the set of right ideals $\{J| f(J)S=S\}$ is a Gabriel filter $\ef_l$ and that $S$ is isomorphic to $R_{\ef_r}.$ Similarly, $S$ is isomorphic to $_{\ef_l}R$ for $\ef_l=\{I| Sf(I)=S\}.$ The last sentence follows also from condition (2) of Theorem \ref{PerfectQuotient}.
\end{proof}

Next, we prove the symmetric version of Theorem \ref{perfect_filter} characterizing the perfect symmetric filters. First we introduce some notation. If $\ef_l$ is a left and $\ef_r$ a right Gabriel filter, $_l\ef_r$ the symmetric filter induced by $\ef_l$ and $\ef_r,$ and $_lR_r$ the corresponding symmetric quotient, let $q^l$ denote the functor of left $R$-modules defined by $q^l_M:$ $M\rightarrow\, _{\ef_l}M$ and $q^r$ denote the functor of right $R$-modules defined by $q^r_M:$ $M\rightarrow M_{\ef_r}.$
Moreover, let $i_M^l$ denote the functor of left $R$-modules defined by $M\rightarrow\;_lR_r \otimes_R M$ and $i_M^r$ denote the functor of right $R$-modules defined by $M\rightarrow M\otimes_R\,_lR_r.$ As in Proposition \ref{symmetric_tensor}, $i_M$ denotes the map  $i_M: M\rightarrow\,_lR_r \otimes_R M\otimes_R\, _lR_r$ and $q_M$ denotes the map $M\rightarrow\, _lM_r$ if $M$ is an $R$-bimodule.

\begin{theorem} The following properties of a Gabriel filter $_l\ef_r$ induced by $\ef_l$ and $\ef_r$ are equivalent.
\begin{itemize}
\item[(1)] $_l\ef_r$ is perfect.

\item[(2)] $\te_l(K)=\ker i_K^l$ for every left $R$-module $K$, $\te_r(L)=\ker i_L^r$ for every right $R$-module $L,$ and $_l\te_r(M)=\ker i_M$ for every $R$-bimodule $M.$

\item[(2')] $_lR_r\otimes_R K=0$ for every $\tau_l$-torsion module $K,$ $L\otimes_R\, _lR_r=0$ for every $\tau_r$-torsion module $L,$ and $_lR_r\otimes_R M\otimes_R\, _lR_r=0$ for every $_l\tau_r$-torsion module $M.$

\item[(3)] $_lR_r q_R(I)=\,_lR_r$ for every $I\in \ef_l,$  $q_R(J)_lR_r=\, _lR_r$ for every $J\in \ef_r,$ and $_lR_r q_R(K)\,_lR_r=\,_lR_r$ for every $K\in\,_l\ef_r.$

\item[(4)] There is a ring isomorphism $j:\,_lR_r\rightarrow\,_{\ef_l}R$ such that $jq_R=q^l_R,$ and a ring isomorphism $k:\,_lR_r\rightarrow R_{\ef_r}$ such that $kq_R=q^r_R.$ For every left $R$-module $K$ there is a left $_lR_r$-module isomorphism $\Phi^l_K: \,_lR_r \otimes_R K\cong \;_{\ef_l}K$ such that $\Phi^l_K i^l_K=q^l_K.$ For every right $R$-module $L$ there is a right $_lR_r$-module isomorphism $\Phi^r_L: L\otimes_R \,_lR_r \cong L_{\ef_r}$ such that $\Phi^r_L i^r_L=q^r_L.$ For every $R$-bimodule $M,$ there is an $R$-bimodule isomorphism \[\Phi_M: \,_lR_r \otimes_R M\otimes_R\, _lR_r\cong \;_lM_r\] such that $\Phi_M i_M=q_M.$ If $M$ satisfies $Q1$-$Q4,$ then $\Phi_M=\Theta_M$ (see Proposition \ref{Map_Theta}) and it is a $_lR_r$-bimodule isomorphism.

\item[(5)] There is a ring isomorphism $j:\,_lR_r\rightarrow\,_{\ef_l}R$ such that $jq_R=q^l_R,$ and a ring isomorphism $k:\,_lR_r\rightarrow R_{\ef_r}$ such that $kq_R=q^r_R.$ Functors $q^l,$ $q^r$ and $q$ are exact and preserve direct sums.

\item[(6)] There is a ring isomorphism $j:\,_lR_r\rightarrow\,_{\ef_l}R$ such that $jq_R=q^l_R,$ and a ring isomorphism $k:\,_lR_r\rightarrow R_{\ef_r}$ such that $kq_R=q^r_R.$ Functors $q^l,$  $q^r, $ and $q$ are exact and $\ef_l,$ $\ef_r,$ and $_l\ef_r$ have basis consisting of finitely generated ideals (left, right, and both sided respectively).
\end{itemize}
\label{perfect_symmetric_filter}
\end{theorem}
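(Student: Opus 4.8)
The plan is to establish the cycle of implications $(1)\Rightarrow(4)\Rightarrow(5)\Rightarrow(6)\Rightarrow(1)$, together with the short loop $(2)\Leftrightarrow(2')$ and $(2)\Leftrightarrow(3)$ and a separate link tying these into the main cycle, mirroring the structure of the one-sided Theorem \ref{perfect_filter} but carrying three objects (left, right, and bimodule) in parallel rather than one. The guiding principle throughout is that the bimodule statements should follow from the one-sided statements applied to $\ef_l$ and $\ef_r$ separately, via Proposition \ref{symmetric_tensor} and the identification $_l\tau_r = \tau_{S\otimes S^{op}}$; so most of the work is bookkeeping to ensure the left and right data assemble correctly.

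First I would show $(1)\Rightarrow(4)$. By Lemma \ref{lemma_perfect}, perfectness of $_l\ef_r$ gives the ring isomorphisms $j:\,_lR_r\cong\,_{\ef_l}R$ and $k:\,_lR_r\cong R_{\ef_r}$ with the stated compatibility, and moreover tells us that $\ef_l$ is a perfect left filter and $\ef_r$ a perfect right filter. Then Theorem \ref{perfect_filter} (applied on each side) yields the left $R$-module isomorphisms $\Phi^l_K:\,_{\ef_l}R\otimes_R K\cong\,_{\ef_l}K$ and right $R$-module isomorphisms $\Phi^r_L: L\otimes_R R_{\ef_r}\cong L_{\ef_r}$, which via $j$ and $k$ become isomorphisms over $_lR_r$ with $\Phi^l_K i^l_K = q^l_K$ and $\Phi^r_L i^r_L = q^r_L$. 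For the bimodule statement, given an $R$-bimodule $M$, I would compute
\[
_lR_r\otimes_R M\otimes_R\,_lR_r \;\cong\; {}_{\ef_l}R\otimes_R M\otimes_R R_{\ef_r}
\;\cong\; {}_{\ef_l}(M\otimes_R R_{\ef_r})
\;\cong\; {}_{\ef_l}(M_{\ef_r})
\]
using the one-sided perfect-filter isomorphisms on each side in turn; the last object must then be identified with $_lM_r$. This identification is the technical heart of the step: one invokes the isomorphism (4.1) of Theorem \ref{PerfectSymmetricQuotient} in the form $_lM_r\cong\,_l(S\otimes_R M\otimes_R S)_r$ together with torsion-freeness of $S\otimes_R M\otimes_R S$ from Proposition \ref{symmetric_tensor}(5), so that the iterated one-sided localization and the symmetric localization agree. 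The resulting composite $\Phi_M$ satisfies $\Phi_M i_M = q_M$, and when $M$ satisfies $Q1$--$Q4$ uniqueness of the map with this property (Proposition \ref{Map_Theta}) forces $\Phi_M = \Theta_M$, which is then a $_lR_r$-bimodule isomorphism.

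Next, $(4)\Rightarrow(5)$: exactness and preservation of direct sums of $q^l$, $q^r$, $q$ follow from the $\Phi$-isomorphisms identifying these functors with $-\otimes_R\,_{\ef_l}R$, $-\otimes_R R_{\ef_r}$ and the two-sided tensor, each of which is right exact and direct-sum-preserving, combined with left exactness of $q^l$, $q^r$, $q$ from Lemma \ref{properties_of_module_of_quot} and its one-sided analogues. Then $(5)\Rightarrow(6)$ and $(6)\Rightarrow(1)$ run as in the one-sided case: exactness of $q^l$ forces $\ef_l$ to have a basis of finitely generated left ideals and makes $_{\ef_l}R$ a flat left $R$-module (this is the content of $(5)\Rightarrow(6)$ in Theorem \ref{perfect_filter}), and together with $j$ being a ring iso one reads off that $q_R$ (hence $f$ up to the iso) is a flat left epimorphism; symmetrically on the right, so $_lR_r$ is flat on both sides and epimorphic, giving condition (1) of Theorem \ref{PerfectSymmetricQuotient}. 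The equality of filters required by the definition of perfect is exactly the three conditions in $(3)$, which I would extract along the way. I expect the main obstacle to be $(4)\Rightarrow(5)\Rightarrow(6)$ bookkeeping of the bimodule filter $_l\ef_r$: one must check that a finitely generated basis for $\ef_l$ and for $\ef_r$ furnishes a finitely generated (two-sided) basis for the induced filter $_l\ef_r$, which uses that an ideal of the form $J\otimes R^{op} + S\otimes I$ with $I,J$ finitely generated is itself finitely generated over $R\otimes_{\Zset} R^{op}$ — routine but needs to be stated. The remaining equivalences $(2)\Leftrightarrow(2')\Leftrightarrow(3)$ and their attachment to $(1)$ are then immediate from Lemma \ref{properties_of_module_of_quot}(2) (torsion $=$ zero quotient) and the one-sided characterizations in Theorem \ref{perfect_filter}, applied once on the left, once on the right, and once for the induced bimodule theory using Proposition \ref{symmetric_tensor}(2).
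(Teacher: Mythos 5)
Your overall architecture is reasonable and several steps do match the paper (the use of Lemma \ref{lemma_perfect} and the one-sided Theorem \ref{perfect_filter} for the left and right parts of (4), the free-resolution bookkeeping for finitely generated bases, the flatness-plus-epimorphism reading of (6)$\Rightarrow$(1)). But there is a genuine gap at the point you yourself flag as the ``technical heart'': the bimodule isomorphism $\Phi_M$. Your chain reduces the claim to identifying $_{\ef_l}(M_{\ef_r})$ with $_lM_r$, i.e.\ to the statement that iterated one-sided localization agrees with symmetric localization. That identification is nowhere established in the paper and is not a consequence of the ingredients you cite: isomorphism (4.1) together with Proposition \ref{symmetric_tensor}(5) gives you $_lM_r\cong\,_l(S\otimes_R M\otimes_R S)_r$ and an \emph{embedding} $S\otimes_R M\otimes_R S\hookrightarrow\,_l(S\otimes_R M\otimes_R S)_r$ (torsion-free modules embed into their modules of quotients), but not surjectivity of that embedding. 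The paper closes this by explicitly constructing the inverse $\Psi_M$: it tensors the exact sequence $0\rightarrow\,_l\te_r(M)\rightarrow M\rightarrow\,_lM_r\rightarrow\coker(q_M)\rightarrow 0$ with $_lR_r$ on both sides and uses condition (2$'$) (torsion modules are killed by the two-sided tensor) to get $_lR_r\otimes_R M\otimes_R\,_lR_r\cong\,_lR_r\otimes_R\,_lM_r\otimes_R\,_lR_r$, then checks $\Phi_M\Psi_M$ and $\Psi_M\Phi_M$ are identities by factoring their difference from the identity through torsion quotients into torsion-free modules. This is why the paper proves $(1)+(2')\Rightarrow(4)$ rather than $(1)\Rightarrow(4)$ directly; your route needs either this inverse construction or an independent proof that $_{\ef_l}(M_{\ef_r})\cong\,_lM_r$, and supplies neither.

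A second, smaller problem: you describe $(2)\Leftrightarrow(2')\Leftrightarrow(3)$ and ``their attachment to (1)'' as immediate from the one-sided characterizations. The direction $(3)\Rightarrow(1)$ is not immediate. The definition of a perfect symmetric filter demands the \emph{equalities} $\ef_l=\{I\mid\,_lR_rq_R(I)=\,_lR_r\}$ and $\ef_r=\{J\mid q_R(J)\,_lR_r=\,_lR_r\}$, and the reverse containments cannot be read off from Theorem \ref{perfect_filter} because that theorem concerns $R_{\ef_l}$ and $R_{\ef_r}$, which need not coincide with $_lR_r$ before perfectness is known. The paper gives a dedicated argument: from $1=\sum q_iq_R(r_i)$ with $r_i\in I$ it produces $I'\in\ef_l$ with $q_R(I')\subseteq q_R(I)$ and deduces $R/I\in\te_l$. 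You would need to reproduce something of this kind rather than cite the one-sided theorem.
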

\begin{proof}
$(1)\Rightarrow(2)$ If $_l\ef_r$ is perfect, then $\ef_l=\{I| _lR_r q_R(I)=\;_lR_r\}$ and $\ef_r=\{J| q_R(J)\;_lR_r=\;_lR_r\}.$
Thus, the condition (2) of Theorem \ref{PerfectSymmetricQuotient} is fulfilled and so (1) of Theorem \ref{PerfectSymmetricQuotient} holds as well. So, $_lR_r$ is flat both as left and right $R$-module.
The theory $\tau_l$ is equal to the left tensoring torsion theory $\,_{_lR_r}\tau$ and $\tau_r$ is equal to the right tensoring torsion theory $\tau_{_lR_r}$ and so $\te_l(K)=\ker(K\rightarrow\;_lR_r \otimes_R K)$ for every left $R$-module $K$, $\te_r(L)=\ker(L\rightarrow L\otimes_R\; _lR_r)$ for every right $R$-module $L.$ From part (3) of Proposition \ref{symmetric_tensor} it follows that $_l\tau_r$ is equal to the symmetric tensoring torsion theory $_{_lR_r}\tau_{_lR_r}$ and that $_l\te_r(M)=\,_{_lR_r}\te_{_lR_r}(M)=\, _{_lR_r}\te(M)\cap \te_{_lR_r}(M)=\ker(i_M).$

$(2)\Rightarrow(2')$ If $K$ is in $\te_l,$ $K=\te_l(K)=\ker(K\rightarrow\;_lR_r \otimes_R K).$ Thus, $1\otimes k=0$ for every $k\in K.$ But then $q\otimes k = q(1\otimes k)=0$ for every $q\in\,_lR_r$ and so $_lR_r\otimes_R K=0.$
The statements for left and symmetric modules follow similarly.

$(2')\Rightarrow (3)$ Let $I\in \ef_l.$ Then $R/I$ is $\te_l$-torsion module. By assumption $0=\,_lR_r\otimes_R R/I=\,_lR_r/\,_lR_rq_R(I).$ Thus $\,_lR_r=\,_lR_rq_R(I).$ The statement for right ideals is proven similarly.

Now let $K\in\,_l\ef_r.$ For $K,$ there is $I\in \ef_l$ and $J\in \ef_r$ such that $IR\subseteq K$ and $RJ\subseteq K.$ Thus, $q_R(I)Rq_R(J)\subseteq q_R(K).$ As $_lR_r q_R(I)=\,_lR_r$ and $q_R(J)\,_lR_r=\,_lR_r,$
\[_lR_r=\,_lR_r\,_lR_r=\,_lR_r q_R(I)q_R(J)\,_lR_r\subseteq\, _lR_rq_R(I)Rq_R(J)\,_lR_r\subseteq\, _lR_rq_R(K)\,_lR_r.\]
Thus $_lR_r=\,_lR_rq_R(K)\,_lR_r.$

$(3)\Rightarrow(1)$ By Theorem \ref{PerfectSymmetricQuotient}, it is sufficient to show that $\ef_l=\{I|_lR_rq_R(I)=\,_lR_r\}$ and $\ef_r=\{J| q_R(J)\,_lR_r=\,_lR_r\}.$ By (3), $\ef_l\subseteq\{I|_lR_rq_R(I)=\,_lR_r\}$ and $\ef_r\subseteq \{J| q_R(J)\,_lR_r=\,_lR_r\}.$ Thus, we need to show that $I\in\ef_l$ for every left ideal with $_lR_rq_R(I)=\,_lR_r,$ and that $J\in\ef_r$ for every right ideal with $q_R(J)\,_lR_r=\,_lR_r.$

Let $I$ be a left $R$-ideal such that $_lR_rq_R(I)=\,_lR_r.$ Then $1=\sum_{i=1}^n q_i q_R(r_i)$ for some $r_i\in I$ and $q_i\in\, _lR_r.$ For every such $q_i,$ there exist a left ideal $I_i\in \ef_l$ such that $q_R(I_i)q_i\subseteq q_R(R)$ and a right ideal $J_i$ such that $q_i q_R(J_i)\subseteq q_R(R).$ Let $I'=\bigcap I_i.$ Then $I'\in \ef_l$ and $q_R(I')q_i\subseteq q_R(R)$ for every $i.$ Thus, $q_R(I')=q_R(I')1=\sum q_R(I')q_i q_R(r_i) \subseteq \sum q_R(R)q_R(r_i)\subseteq q_R(I).$ As $q_R(I')\subseteq q_R(I),$ the map $q_{R/I'}(R/I')\rightarrow q_{R/I}(R/I)$ is well defined and onto. $I'\in \ef_l$ implies  $q_{R/I'}(R/I')=0.$ But then  $q_{R/I}(R/I)=0$ as well and so $R/I\in\te_l,$ i.e. $I\in \ef_l.$ Similarly, we prove the claim for right ideals.

This finishes the proof that $(1)\Leftrightarrow(2)\Leftrightarrow(2')\Leftrightarrow(3).$ Note that Lemma \ref{lemma_perfect} proves that (1) implies that $_lR_r$ is isomorphic (as a ring) to both $_{\ef_l}R$ and $R_{\ef_r}.$

$(1)+(2')\Rightarrow (4)$ Condition (1) implies the first sentence of (4) by Lemma \ref{lemma_perfect}.
The existence of isomorphisms $\Phi^l$ and $\Phi^r$ then follows directly from Theorem \ref{perfect_filter}.

From the proof of Theorem \ref{PerfectSymmetricQuotient}, we have that the $R$-module map $q(i_M):\,_l(M)_r\cong\,_l(\,_lR_r\otimes_R M\otimes_R\, _lR_r)_r$ is an isomorphism if $_l\ef_r$ is perfect. Consider now the exact sequence $0\rightarrow\,_l\te_r(M)\rightarrow M\rightarrow\,_lM_r \rightarrow\coker(q_M)\rightarrow0.$ Tensor it with $_lR_r$ from both sides. As $_l\te_r(M)$ and $\coker(q_M)$ are $_l\tau_r$-torsion, they are $_{_lR_r}\tau_{_lR_r}$-torsion as well by part (2'). So, we have that $i(q_M)$ is the isomorphism
\[_lR_r\otimes_R M\otimes_R\, _lR_r\cong\, _lR_r\otimes_R \,_lM_r\otimes_R\, _lR_r.\]

Consider the following $R$-bimodule maps \[\Phi_M:\,_lR_r\otimes_R M\otimes_R\, _lR_r\rightarrow\,_l(\,_lR_r\otimes_R M\otimes_R\, _lR_r)_r\cong\,_lM_r\] where the first map is $q_{_lR_r\otimes_R M\otimes_R\, _lR_r}$ and the second is the inverse of the isomorphism $q(i_M),$ and \[\Psi_M:\,_lM_r\rightarrow\,_lR_r\otimes_R\,_lM_r\otimes_R\, _lR_r\cong \,_lR_r\otimes_R M\otimes_R\, _lR_r\] where the first map is $i_{_lM_r}$ and the second the inverse of the isomorphism $i(q_M).$  If $m\in M,$ $q_M(m)$ can be represented by a compatible pair of homomorphism $(R_m, L_m)$ where $R_m$ is the right multiplication by $m$ and $L_m$ is the left multiplication by $m.$ The map $\Phi_M,$ maps $i_M(m)=1\otimes m\otimes 1$ to $(R_m, L_m)$ and $\Psi_M$ maps $(R_m,L_m)$ to $1\otimes m\otimes 1.$ So, we have that $\Phi_M i_M=q_M$ and $\Psi_M q_M=i_M.$

Thus, $\Phi_M\Psi_M$ is the identity when restricted to $q_M(M)$ and $1_{_lM_r}-\Phi_M\Psi_M$ factors to an $R$-map $_lM_r/q_M(M)\rightarrow\, _lM_r.$ As the module $_lM_r/q_M(M)$ is $_l\tau_r$-torsion (part (3) of Proposition \ref{properties_of_module_of_quot})
and the module $_lM_r$ is $_l\tau_r$-torsion-free, $1_{_lM_r}-\Phi_M\Psi_M=0$ on entire $_lM_r$ and so $\Phi_M\Psi_M=1_{_lM_r}.$

Also, the map $\Psi_M\Phi_M$ is the identity when restricted to $i_M(M).$ The module $_lR_r\otimes_R M\otimes_R\, _lR_r$ is $_{_lR_r}\tau_{_lR_r}$-torsion-free (part (5) of Proposition \ref{symmetric_tensor}) and the quotient $\coker(i_M)=(_lR_r\otimes_R M\otimes_R\, _lR_r)/i_M(M)$ is $_{_lR_r}\tau_{_lR_r}$-torsion module (see proof of $(1)\Rightarrow(2)$ in Theorem \ref{PerfectSymmetricQuotient}). Thus, we obtain that $\Psi_M\Phi_M$ is the identity on entire $_lR_r\otimes_R M\otimes_R\, _lR_r.$

If $M$ satisfies $Q1$-$Q4,$ then $\Phi_M=\Theta_M$ on $i_M(M).$ Then $\Phi_M-\Theta_M$ factors to a map $(\,_lR_r\otimes_R M\otimes_R\, _lR_r)/i_M(M)\rightarrow\,_lM_r.$ The first module is $_{_lR_r}\tau_{_lR_r}$-torsion and so $_l\tau_r$-torsion also by part (2'). The second module is $_l\tau_r$-torsion-free. Thus, this map is 0 and so $\Phi_M=\Theta_M$ on entire $_lR_r\otimes_R M\otimes_R\, _lR_r.$ Thus, $\Phi_M$ is a $_lR_r$-bimodule map and $\Theta_M$ is an isomorphism because $\Phi_M$ is.

$(4)\Rightarrow(2)$ From $\Phi^l_K i^l_K=q^l_K,$ it follows that $\te_l(K)=\ker q^l_K=\ker i^l_K$ for every left $R$-module $K.$ Similarly $\Phi^r_L i^r_L=q^r_L,$ implies that $\te_r(L)=\ker q^r_L=\ker i^r_L$ for every right $R$-module $L$ and $\Phi_Mi_M=q_M$ implies that $_l\te_r(M)=\ker q_M=\ker i_M$ for every $M.$

This proves that $(1)\Leftrightarrow(2)\Leftrightarrow(2')\Leftrightarrow(3)\Leftrightarrow(4).$ For the remaining part we shall show that $(5)\Rightarrow(3),$ $(1)+(4)\Rightarrow(5),$ $(6)\Rightarrow(3),$ and $(1)+(4)\Rightarrow(6).$

$(5)\Rightarrow(3)$ If $q^l$ is exact and preserves direct sums, then $\ef_l$ is perfect left filter and $_{\ef_l}R$ is perfect left ring of quotients by Theorem \ref{perfect_filter}. Thus, for every left $R$-ideal $I\in \ef_l,$ $_{\ef_l}Rq^l_R(I)=\,_{\ef_l}R.$ The isomorphism $j:\, _lR_r\cong\,_{\ef_l}R$ establishes that every left ideal $I$ is such that $_lR_r q_R(I)=\,_lR_r $ iff $_{\ef_l}Rq_R^l(I)=\,_{\ef_l}R.$ Thus, the claim in (3) about the left $R$-ideals holds. Similarly, the claim in (3) about the right $R$-ideals holds.

As $_{\ef_l}R$ is perfect, there is an $R$-bimodule isomorphism $\phi:\,_{\ef_l}R\otimes_R\, _{\ef_l}R\rightarrow \,_{\ef_l}R$ given by $\phi(s\otimes t)=st.$ Combining $\phi$ with the ring isomorphism $j:\,_lR_r\rightarrow\,_{\ef_l}R$ gives us the $R$-bimodule isomorphism $_lR_r\otimes_R\,_lR_r\cong\,_lR_r$ defined by $p\otimes q\mapsto j^{-1}(\phi(j(p)\otimes j(q)))=pq.$

Now, let $K$ be an $R$-ideal in $_l\ef_r.$ Consider a free resolution of $R/K,$ $R^{\alpha}\rightarrow R^{\beta}\rightarrow R/K\rightarrow 0$ for some ordinals $\alpha$ and $\beta.$ Since $q$ is exact and preserves direct sums, applying functors $i$ and $q$ to this exact sequence produces the following commutative diagram
{\scriptsize
\begin{diagram}
(_lR_r\otimes_R\,_lR_r)^{\alpha} & \rTo & (_lR_r\otimes_R\,_lR_r)^{\beta} & \rTo & {_lR_r\otimes_R R/K\otimes_R\,_lR_r} &\rTo & 0\\
\dTo_{\cong} & & \dTo_{\cong} & & \dTo & & \\
(_lR_r)^{\alpha} & \rTo & (_lR_r)^{\beta} & \rTo & {\,_l(R/K)_r} &\rTo & 0
\end{diagram}
}
From this diagram, it follows that the last vertical map is an $R$-bimodule isomorphism. As $K$ is in $_l\ef_r,$ $\,_l(R/K)_r=0.$ So, $_lR_r\otimes_R R/K\otimes_R\,_lR_r=0$ and thus $_lR_r q_R(K)\,_lR_r=\, _lR_r.$

$(1)+(4)\Rightarrow(5)$ Condition (1) implies the first sentence of (5) by Lemma \ref{lemma_perfect}. Functors $q^l$ and $q^r$ are exact and preserve direct sums by part (5) of Theorem \ref{perfect_filter}. As functor $i$ defined by $i_M: M\rightarrow\,_lR_r \otimes_R M\otimes_R\,_lR_r $ is exact and preserves direct sums if $_lR_r$ is perfect, part (4) implies that $q$ is exact and preserves direct sums.

$(6)\Rightarrow(3)$ If $q^l$ is exact and $\ef_l$ has a basis of finitely generated left ideals, then $\ef_l$ is perfect left filter and $_{\ef_l}R$ is perfect left ring of quotients. The isomorphism $j:\, _lR_r\cong\,_{\ef_l}R$ establishes that every left ideal $I$ is such that $_lR_r q_R(I)=\,_lR_r $ iff $_{\ef_l}Rq_R^l(I)=\,_{\ef_l}R.$ Thus, Theorem \ref{perfect_filter} establishes that $\ef_l=\{I|_lR_r q_R(I)=\,_lR_r\}.$ We obtain analogous claim for the right filter $\ef_r$ and $R_{\ef_r}.$ We also obtain the $R$-bimodule isomorphism $_lR_r\otimes_R\,_lR_r\cong\,_lR_r$ just as in proof of $(5)\Rightarrow(3).$

To finish the proof of (3), it is sufficient to prove that $_lR_r q_R(K)\,_lR_r=\,_lR_r$ for every $K\in\,_l\ef_r.$ Since $_l\ef_r$ has a basis of finitely generated ideals, it is sufficient to prove this claim for $K$ finitely generated. If $K$ is finitely generated, $R/K$ is finitely presented so there is an exact sequence $R^{m}\rightarrow R^{n}\rightarrow R/K\rightarrow 0$ for some nonnegative integers $m$ and $n.$ Since $q$ is exact, applying functors $i$ and $q$ to this exact sequence produces the following commutative diagram
{\scriptsize
\begin{diagram}
(_lR_r\otimes_R\,_lR_r)^{m} & \rTo & (_lR_r\otimes_R\,_lR_r)^{n} & \rTo & {_lR_r\otimes_R R/K\otimes_R\,_lR_r} &\rTo & 0\\
\dTo_{\cong} & & \dTo_{\cong} & & \dTo & & \\
(_lR_r)^{m} & \rTo & (_lR_r)^{n} & \rTo & {\,_l(R/K)_r} &\rTo & 0
\end{diagram}
}
From this diagram, it follows that the last vertical map is an isomorphism. As $K\in\,_l\ef_r,$ $_l(R/K)_r=0$. Then $_lR_r\otimes_R R/K\otimes_R\,_lR_r=0$ as well and so $_lR_r q_R(K)\,_lR_r=\,_lR_r.$

$(1)+(4)\Rightarrow(6)$ Condition (1) implies the first sentence of (6) by Lemma \ref{lemma_perfect}. Functors
$q^l$ and $q^r$ are exact and filters $\ef_l$ and $\ef_r$ have basis of finitely generated ideals by part (6) of Theorem \ref{perfect_filter}. In this case the symmetric filter generated by $\ef_l$ and $\ef_r$ have a finite basis also and part (4) implies that the functor $q$ is also exact.
\end{proof}

Theorem \ref{perfect_symmetric_filter} establishes a one-to-one correspondence between the
set of perfect induced filters $_l\ef_r$ on $R$ and the perfect symmetric rings of
quotients given by $_l\ef_r \mapsto\; _lR_r$ with the inverse $S\mapsto\; _l\ef_r$ where
$\ef_l=\{I| Sf(I)=S\}$ and $\ef_r=\{J| f(J)S=S\}$
for $f:R\rightarrow S$ epimorphism that makes $S$ into a
flat left and a flat right $R$-module. The theorem also establishes that a perfect filter $_l\ef_r$ is equal to the filter of the symmetric tensoring torsion theory $_l\tau_r=\,_{_lR_r}\tau_{_lR_r}.$

\section{Total symmetric ring of quotients}
\label{section_total_symmetric}

In this section we prove the existence of the maximal perfect symmetric ring of quotients (i.e symmetric version of Theorem XI 4.1, p. 233, \cite{Stenstrom}). To be consistent with the existing terminology for the right-sided version of this concept, we call it {\em total symmetric ring of quotients,} or maximal perfect symmetric localization of $R$ and denote it by $\Qsimtot(R).$
\begin{theorem}
For a right $R$, there exist a ring $\Qsimtot(R)$ and an injective ring epimorphism $f:R\rightarrow\Qsimtot(R)$ such that
\begin{enumerate}
\item $f$ makes $\Qsimtot(R)$ into left and right flat $R$-module;

\item If $g: R\rightarrow S$ is an injective ring epimorphism that makes $S$ into left and right flat $R$-module, then there is a unique monomorphism $\overline{g}: S\rightarrow\Qsimtot$ with $f=\overline{g}g.$
\end{enumerate}
\label{total_symmetric_quotient}
\end{theorem}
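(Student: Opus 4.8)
The plan is to mimic the construction of $\Qtot(R)$ as a directed union of all perfect right rings of quotients, but carried out in the symmetric setting using Theorem \ref{perfect_symmetric_filter}. First I would let $\mathcal{P}$ denote the set (up to isomorphism over $R$) of all injective ring epimorphisms $f:R\rightarrow S$ that make $S$ into a flat left and flat right $R$-module; by Theorem \ref{PerfectSymmetricQuotient}, these are exactly the perfect symmetric rings of quotients, and by Theorem \ref{perfect_symmetric_filter} each such $S$ corresponds to a unique perfect induced filter $_l\ef_r$ on $R$ with $S\cong\,_lR_r$. Partially order $\mathcal{P}$ by $S\leq S'$ if the epimorphism $R\rightarrow S'$ factors (necessarily uniquely, by the epimorphism property) through $R\rightarrow S$; equivalently, the corresponding filters satisfy $\ef_l\subseteq\ef_l'$ and $\ef_r\subseteq\ef_r'$. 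Since any perfect symmetric $S$ is, by Lemma \ref{lemma_perfect}, simultaneously a perfect right ring of quotients $R_{\ef_r}$, it embeds in $\Qmax(R)$, and in fact as an $R$-subbimodule of $\Qsimmax(R)$; so $\mathcal{P}$ may be viewed as a family of subrings of $\Qsimmax(R)$.

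The key step is to show that $\mathcal{P}$ is directed and that the directed union is again perfect symmetric. For directedness, given $S$ and $S'$ with filters $(\ef_l,\ef_r)$ and $(\ef_l',\ef_r')$, I would form the filters generated by $\ef_l\cup\ef_l'$ and $\ef_r\cup\ef_r'$ and show the induced symmetric filter is again perfect; this is where I would invoke the one-sided result that the join of two perfect right filters is perfect (this is exactly how $\Qtot(R)$ is built in \cite{Stenstrom}, Theorem XI 4.1) applied on each side, together with Theorem \ref{perfect_symmetric_filter}(6) or (2') to transfer the conclusion to the symmetric filter — the point being that a filter induced by perfect $\ef_l$ and $\ef_r$ is automatically perfect once one checks the torsion-annihilation conditions, which reduce to the one-sided ones since $_l\te_r(M)=\te_l(M)\cap\te_r(M)$. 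Then I would set $Q=\varinjlim_{S\in\mathcal{P}}S$ (a directed colimit of rings, hence a ring, computed inside $\Qsimmax(R)$ as $\bigcup_{S\in\mathcal{P}}S$), with $f:R\rightarrow Q$ the canonical map. That $f$ is injective is clear since each $R\rightarrow S$ is; that $f$ is a ring epimorphism and makes $Q$ flat on both sides follows because a directed colimit of flat modules is flat, and a directed colimit of ring epimorphisms $R\rightarrow S$ over $R$ is again a ring epimorphism (the condition $Q/f(R)\otimes_R Q = Q\otimes_R Q/f(R)=0$ passes to directed colimits since tensor products commute with them). This gives property (1) and shows $Q\in\mathcal{P}$, so $Q=\Qsimtot(R)$ is the largest element.

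For the universal property (2): given an injective ring epimorphism $g:R\rightarrow S$ making $S$ flat on both sides, $S\in\mathcal{P}$ by definition, so $S\leq Q$ in the ordering, which by construction of the order gives a (unique) ring map $\overline{g}:S\rightarrow Q$ with $f=\overline{g}g$; uniqueness is immediate since $g$ is an epimorphism, and $\overline{g}$ is a monomorphism because $\overline{g}g=f$ is injective forces $\overline{g}$ injective on $g(R)$ and then, $g(R)$ being dense-enough (indeed $g$ epi means $S$ is generated over $g(R)$ in the epimorphism sense), injective on all of $S$ — more cleanly, $\overline{g}$ is just the inclusion of subrings of $\Qsimmax(R)$. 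The main obstacle I anticipate is verifying that the join of two perfect symmetric filters is perfect: one must confirm that the supremum of perfect filters in the one-sided theory is perfect (standard, from \cite{Stenstrom}) and then that forming the induced symmetric filter preserves perfectness, for which Theorem \ref{perfect_symmetric_filter} is the essential tool — equivalently, checking that the directed union of flat epimorphic extensions inside $\Qsimmax(R)$ is again a flat epimorphic extension, where the flatness is the delicate point and relies on flatness being preserved under directed colimits of modules.
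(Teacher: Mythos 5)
Your overall architecture is the same as the paper's: form the family of injective flat epimorphisms $R\rightarrow S$, embed each such $S$ in $\Qsimmax(R)$, show the family is directed, and take the directed union. The universal property and the passage of flatness and the epimorphism property to directed colimits are handled exactly as in the paper. (One small point: the embedding of each perfect symmetric $S$ into $\Qsimmax(R)$ is justified in the paper by the faithfulness of $_S\tau_S$ from part (4) of Proposition \ref{symmetric_tensor}, not merely by the one-sided embedding into $\Qmax(R)$.)

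The genuine gap is in your directedness step. You assert that ``a filter induced by perfect $\ef_l$ and $\ef_r$ is automatically perfect once one checks the torsion-annihilation conditions, which reduce to the one-sided ones since $_l\te_r(M)=\te_l(M)\cap\te_r(M)$.'' This is false in general: perfectness of the induced symmetric filter requires that the left localization $_{\ef_l}R$ and the right localization $R_{\ef_r}$ coincide \emph{as overrings of $R$}, not merely that each one-sided filter is perfect. Example \ref{Examples_of_total_symmetric}(1) in the paper is a counterexample to your claim: there $\Qtot(R)$ and $\Qlrtot^l(R)$ are perfect one-sided localizations (both isomorphic to $M_3(F)$), yet the symmetric ring of quotients they induce is just $R$, and the induced symmetric filter is not perfect. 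So after forming the joins $\ef_l\vee\ef_l'$ and $\ef_r\vee\ef_r'$ you still owe an argument that the resulting left and right join-localizations are the \emph{same} subring of $\Qsimmax(R)$. The paper supplies exactly this by working with the subring $U$ of $\Qsimmax(R)$ generated by $S$ and $T$: Stenstr\"om's Lemma 4.2 shows $U$ satisfies the element-wise condition (3) of Theorem \ref{PerfectQuotient}, its mirror gives the left-sided half, and together these are condition (3) of Theorem \ref{PerfectSymmetricQuotient}, so the single ring $U$ is perfect symmetric (and $f_U$ is injective because elements of $U$ are sums of products of elements of $S$ and $T$). Your closing reformulation --- that one must check the directed union of flat epimorphic extensions inside $\Qsimmax(R)$ is again a flat epimorphic extension --- is the right instinct, but the filter-join route as you justified it does not close this hole.
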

\begin{proof}
If $f_S:R\rightarrow S$ is a ring epimorphism that makes $S$ left and right $R$-flat, then $\ker f_S$ is the torsion submodule of $R$ with respect to  $_S\tau_S.$ If $f_S$ is injective, $_S\tau_S$ is faithful by part (4) of Proposition \ref{symmetric_tensor} and so $S$ can be embedded in $\Qsimmax(R).$ Thus, let us consider the family $\pe$ of all subrings $S$ of $\Qsimmax(R)$ such that an injective epimorphism $f_S: R\rightarrow S$ makes $S$ into a perfect symmetric ring of quotients of $R.$ This family is nonempty as $R$ is in it. The inclusion of subrings in $\pe$ corresponds to the inclusion in the perfect torsion theories they determine just as in the one-sided case (see the first paragraph in the proof of Theorem 4.1, p. 233 in \cite{Stenstrom}). Also,
for $S\in\pe,$ $f_S$ composed with the isomorphism $S\cong\,  _{_S\ef_S}R_{_S\ef_S}$ is the canonical homomorphism $R\rightarrow\,_{_S\ef_S}R_{_S\ef_S}$ given by $r\mapsto (R_r , L_r )$ where $R_r$ is the multiplication by $r$ from the right and $L_r$ is the multiplication by $r$ from the left. Thus, if $S$ and $T$ are in $\pe,$ $i: S\rightarrow T$ the inclusion of $S$ in $T,$ and $f_T: R\rightarrow T$ injective ring epimorphism that makes $T$ into a left and right flat $R$-module, it is easy to see that both $if_S$ and $f_T$ map any $r\in R$ to element of $T$ that corresponds to $(R_r, L_r)$ under the isomorphism $T\cong \,_{_T\ef_T}R_{_T\ef_T}.$ Thus, $f_T=i f_S.$

Also, if $S$ and $T$ are in $\pe,$ then the smallest subring $U$ of $\Qsimmax(R)$ that contains $S$ and $T$ satisfies condition (3) of Theorem \ref{PerfectQuotient} by Lemma 4.2, p. 234 in \cite{Stenstrom}. By symmetry, the condition (3) of Theorem \ref{PerfectSymmetricQuotient} is satisfied so $U$ is perfect symmetric. As elements of $U$ are of the form $\sum_i\prod_j s_{ij} t_{ij},$ the injectivity of $f_S$ and $f_T$ guarantees the injectivity of $f_U.$ So, $U$ is in $\pe.$ Thus, $\pe$ is directed by inclusion.

Consider the direct limit of all elements of $\pe$. Denote it by $\Qsimtot(R).$ We define the injective map $f: R\rightarrow \Qsimtot(R)$ as $f=i_Sf_S$ for any perfect symmetric ring of quotients $S$ with ring epimorphism $f_S: R\rightarrow S$ and the embedding $i_S: S\rightarrow\Qsimtot(R).$ $f$ is well defined as if $T\in\pe$ is an overring of $S$ with $i$ inclusion of $S$ in $T,$ then $i_Tf_T=i_T i f_S=i_S f_S=f.$ Clearly, $f$ is injective. To show that $f$ is a ring epimorphism, consider the ring homomorphisms $\alpha, \beta: \Qsimtot(R)\rightarrow T$ for any ring $T$ such that $\alpha f=\beta f.$ As $f_S$ is epimorphism for every $S\in \pe,$ $\alpha i_S f_S=\beta i_S f_S$ implies that $\alpha i_S=\beta i_S.$ But as this holds for every $S\in\pe$, it implies that $\alpha=\beta$ on entire $\Qsimtot(R).$ $\Qsimtot(R)$ is left and right flat as the direct limit of flat modules is flat. Thus, $\Qsimtot(R)$ is the maximal element of $\pe.$

If $g: R\rightarrow S$ is an injective ring epimorphism that makes $S$ into left and right flat $R$-module, then $S$ is in $\pe$ and so it embeds in $\Qsimtot(R).$ Denote this embedding by $\overline{g}.$ By construction of $\Qsimtot(R),$ $f=\overline{g}g.$ $\overline{g}$ is unique such map because $g$ is an epimorphism and so $f=\overline{g}g=hg$ implies $\overline{g}=h$ for any $h$ with $f=hg.$
\end{proof}

The next two corollaries prove some further properties of $\Qsimtot.$
\begin{corollary} $\Qsimtot(R)$ embeds in $\Qtot(R)$ and $\Qlrtot^l(R).$ If $\Qtot(R)=\Qlrtot^l(R),$ then $\Qsimtot(R)=\Qtot(R)=\Qlrtot^l(R).$
\label{corollary_of_Qsimtot_constuction}
\end{corollary}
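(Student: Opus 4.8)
The plan is to deduce everything from Lemma \ref{lemma_perfect}, which says that a perfect symmetric ring of quotients is automatically a perfect right and a perfect left ring of quotients, together with the maximality built into $\Qtot(R)$, $\Qlrtot^l(R)$, and $\Qsimtot(R)$.

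First I would record that, by Theorem \ref{total_symmetric_quotient}, the structure map $f:R\to\Qsimtot(R)$ is an injective ring epimorphism making $\Qsimtot(R)$ flat as a left and as a right $R$-module, so $\Qsimtot(R)$ is a perfect symmetric ring of quotients. By Lemma \ref{lemma_perfect}, $\Qsimtot(R)$ is then also a perfect right ring of quotients, isomorphic to $R_{\ef_r}$ for the perfect right filter $\ef_r=\{J\mid f(J)\Qsimtot(R)=\Qsimtot(R)\}$, and likewise a perfect left ring of quotients. Since $f$ is injective the corresponding torsion theory is faithful, so $\Qsimtot(R)$ embeds in $\Qmax(R)$ compatibly with the localization maps; as $\Qtot(R)$ contains every perfect right ring of quotients sitting inside $\Qmax(R)$, we obtain an embedding $\Qsimtot(R)\hookrightarrow\Qtot(R)$ over $R$, and the symmetric (left-sided) argument gives $\Qsimtot(R)\hookrightarrow\Qlrtot^l(R)$ over $R$.

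For the equality, suppose $\Qtot(R)=\Qlrtot^l(R)$; write $T$ for this common ring and $g:R\to T$ for its structure map. Then $g$ is a ring epimorphism (being the structure map of $\Qtot(R)$), $T$ is left $R$-flat (since $T=\Qtot(R)$ is a perfect right ring of quotients), $T$ is right $R$-flat (since $T=\Qlrtot^l(R)$ is a perfect left ring of quotients), and $g$ is injective. Thus $g$ satisfies condition (1) of Theorem \ref{PerfectSymmetricQuotient}, so $T$ is a perfect symmetric ring of quotients of $R$; hence $T$ belongs to the family $\pe$ from the proof of Theorem \ref{total_symmetric_quotient} and therefore embeds in $\Qsimtot(R)$ over $R$. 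Composing this with the embedding $\Qsimtot(R)\hookrightarrow\Qtot(R)=T$ over $R$ from the previous step, and using that $f$ and $g$ are epimorphisms (so that $R$-ring maps out of these quotients are determined by their restrictions along $f$, resp. $g$), the two embeddings are mutually inverse, whence $\Qsimtot(R)=T=\Qtot(R)=\Qlrtot^l(R)$.

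The one point that requires care is the ambient-ring bookkeeping: one must check that each embedding (of $\Qsimtot(R)$ into $\Qmax(R)$ and thence into $\Qtot(R)$, and of $T$ into $\Qsimtot(R)$) can be chosen to commute with the structure maps from $R$, so that the hypothesis $\Qtot(R)=\Qlrtot^l(R)$ genuinely yields a single ring $T$ with a single $R$-ring structure and so that the composed maps collapse to identities. Granting that, the statement is a formal consequence of Lemma \ref{lemma_perfect}, Theorem \ref{PerfectSymmetricQuotient}, and the maximality of $\Qtot$, $\Qlrtot^l$, and $\Qsimtot$.
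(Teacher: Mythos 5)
Your proposal is correct and follows essentially the same route as the paper: $\Qsimtot(R)$ is a flat epimorphic extension on each side, hence contained in $\Qtot(R)$ and $\Qlrtot^l(R)$ by their maximality, and conversely $\Qtot(R)=\Qlrtot^l(R)$ forces that common ring to satisfy condition (1) of Theorem \ref{PerfectSymmetricQuotient} and so to lie in $\Qsimtot(R)$. The paper's proof is simply a terser version of yours (it treats all the quotient rings as overrings of $R$ inside the relevant maximal quotients, so the compatibility-of-structure-maps bookkeeping you flag is implicit).
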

\begin{proof}
Since the inclusion $R\subseteq\Qsimtot(R)$ is an epimorphism that makes $\Qsimtot(R)$ left $R$-flat, $\Qsimtot(R)\subseteq\Qtot(R).$ Similarly, $\Qsimtot(R)\subseteq\Qlrtot^l(R).$

As $\Qtot(R)$ is left $R$-flat,  $\Qlrtot^l(R)$ is right $R$-flat, and the inclusion $R\subseteq\Qtot(R)$ is an epimorphism, $\Qtot(R)=\Qlrtot^l(R)$ implies that $\Qtot(R)$ is a perfect symmetric ring of quotients. Thus, $\Qtot(R)=\Qlrtot^l(R)\subseteq\Qsimtot(R).$ The converse always holds.
\end{proof}

\begin{corollary}
(1) If $R$ is a left and right Ore ring, then $\Qlrcl(R)$ is the perfect symmetric ring of quotients and is contained in $\Qsimtot(R).$ The torsion theory $_{\Qlrcl(R)}\tau_{\Qlrcl(R)}$ coincides with the classical
torsion theory.

(2) If $R$ is von Neumann regular, then $R=\Qsimtot(R)=\Qtot(R)=\Qlrtot^l(R).$

(3) If $R$ is a right semihereditary ring with $\Qmax(R)=\Qlrtot^l(R),$ then $R$ is also left semihereditary and $\Qsimtot(R)=\Qsimmax(R)=\Qmax(R).$

(4) If $R$ is left and right nonsingular with $\Qmax(R)=\Qlmax(R)$ semisimple, then $\Qsimmax(R)=\Qmax(R)=\Qlmax(R)=\Qtot(R)=\Qlrtot^l(R)=\Qsimtot(R).$
\label{Qsimtot_of_special_rings}
\end{corollary}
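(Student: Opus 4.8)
The uniform idea is to produce, in each case, a ring $Q$ that is a flat epimorphic extension of $R$ on \emph{both} sides: Theorem \ref{PerfectSymmetricQuotient} then makes $Q$ a perfect symmetric ring of quotients, the universal property of Theorem \ref{total_symmetric_quotient}(2) embeds $Q$ into $\Qsimtot(R)$, and the containments $\Qsimtot(R)\subseteq\Qtot(R)$, $\Qsimtot(R)\subseteq\Qlrtot^l(R)$ from Corollary \ref{corollary_of_Qsimtot_constuction} (together with $\Qsimtot(R)\subseteq\Qsimmax(R)\subseteq\Qmax(R)$, the first since a perfect symmetric ring of quotients is in particular a symmetric one by part (4) of Proposition \ref{symmetric_tensor} when the map is injective) pin $Q$ down. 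For part (1): since $R$ is right Ore, $\Qcl(R)$ is flat as a left $R$-module, and since $R$ is left Ore the classical left ring of quotients is flat as a right $R$-module; both coincide with $\Qlrcl(R)$, which is therefore two-sided $R$-flat, and the Ore localization $R\to\Qlrcl(R)$ is a ring epimorphism. Hence $\Qlrcl(R)$ is perfect symmetric and, being an injective epimorphic image of $R$, embeds in $\Qsimtot(R)$; moreover Proposition \ref{symmetric_tensor}(1) identifies $_{\Qlrcl(R)}\tau_{\Qlrcl(R)}$ with the torsion theory induced by $_{\Qlrcl(R)}\tau$ and $\tau_{\Qlrcl(R)}$, and as these are the classical left and right torsion theories, this is exactly the classical torsion theory of Example \ref{Examples_symmetric}(2).

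For part (2) I would show $\Qtot(R)=R$. If $S\subseteq\Qmax(R)$ is a perfect right ring of quotients via the inclusion $f$, then its filter $\ef$ has a basis of finitely generated right ideals by Theorem \ref{perfect_filter}(6); each such ideal equals $eR$ with $e^2=e$ (von Neumann regularity), membership in $\ef$ forces $f(e)S=S$, hence $f(e)=1_S=f(1_R)$, and injectivity of $f$ gives $eR=R$. So $\ef=\{R\}$, the torsion theory is trivial, $S=R$, and $\Qtot(R)=R$; symmetrically $\Qlrtot^l(R)=R$, and Corollary \ref{corollary_of_Qsimtot_constuction} forces $\Qsimtot(R)=R$.

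For part (4): if $R$ is right nonsingular with $\Qmax(R)$ semisimple, then every essential right ideal $I$ of $R$ remains essential in $\Qmax(R)$ as a right $R$-submodule, so $I\Qmax(R)$ is an essential right ideal of the semisimple ring $\Qmax(R)$ and hence equals $\Qmax(R)$. Choosing, for $q\in\Qmax(R)$, such an $I$ with $qI\subseteq R$ and writing $1=\sum a_ix_i$ with $a_i\in I$ verifies condition (3) of Theorem \ref{PerfectQuotient} (clause (ii) being vacuous since $R\hookrightarrow\Qmax(R)$ is injective), so $\Qmax(R)=\Qtot(R)$ is left $R$-flat; symmetrically $\Qlmax(R)=\Qlrtot^l(R)$ is right $R$-flat. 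With the hypothesis $\Qmax(R)=\Qlmax(R)=:Q$, the ring $Q$ is two-sided flat over $R$ and $R\to Q$ is an epimorphism, so Theorem \ref{PerfectSymmetricQuotient} makes $Q$ perfect symmetric; then $Q\subseteq\Qsimtot(R)\subseteq\Qtot(R)=Q$ and $\Qsimtot(R)\subseteq\Qsimmax(R)\subseteq\Qmax(R)=Q$ collapse all six rings to $Q$.

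Part (3) is where I expect the main obstacle. A right semihereditary ring is right nonsingular, so $\Qmax(R)$ is von Neumann regular, and it is standard that $\Qmax(R)$ is then flat as a left $R$-module; combining this with the hypothesis that $\Qmax(R)=\Qlrtot^l(R)$ is right $R$-flat and $R\to\Qmax(R)$ is an epimorphism, Theorem \ref{PerfectSymmetricQuotient} again makes $\Qmax(R)$ perfect symmetric and, just as in part (4), $\Qsimtot(R)=\Qsimmax(R)=\Qmax(R)$. It remains to show $R$ is left semihereditary: right semiheredity gives $\mathrm{w.gl.dim}\,R\le 1$, so every finitely generated left ideal of $R$ is flat, and an essentiality chase through $R\leq^{\mathrm{ess}}\Qlmax(R)$ (as left $R$-modules) also yields that $R$ is left nonsingular. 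The missing step is to upgrade "flat" to "projective" for finitely generated left ideals — equivalently, to establish that $R$ is left coherent — which I would attempt by transferring coherence from the two-sided coherent von Neumann regular ring $\Qmax(R)$ along the two-sided flat epimorphism $R\to\Qmax(R)$; controlling the finite generation of the syzygy of a finitely generated left ideal after localization is the point I expect to require the most care, and it is plausible that a known structure theorem for one-sided semihereditary rings in terms of $\Qmax(R)$ can be invoked to shortcut it.
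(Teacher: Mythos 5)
Parts (1), (2) and (4) of your proposal are correct and follow essentially the same route as the paper; the only difference is that where the paper cites Stenstr\"om for the facts that $R=\Qtot(R)=\Qlrtot^l(R)$ for von Neumann regular $R$ (Example 1, p.~235) and that $\Qtot(R)=\Qmax(R)$ when $\Qmax(R)$ is semisimple (Prop.~5.3 and Example 1, pp.~236--237), you prove them directly (the idempotent-generator argument for the perfect filter in (2), and the essential-ideal argument verifying condition (3) of Theorem \ref{PerfectQuotient} in (4)). Both computations are sound, and your use of Corollary \ref{corollary_of_Qsimtot_constuction} and the chain $\Qsimtot\subseteq\Qsimmax\subseteq\Qmax$ to collapse the rings matches the paper.

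Part (3) contains the genuine gap, and you have located it correctly yourself: the claim that $R$ is left semihereditary is not established. Weak global dimension $\le 1$ gives only that finitely generated left ideals are flat, and upgrading flat to projective requires left coherence (or a finite-presentation argument), which your sketch of ``transferring coherence along the two-sided flat epimorphism $R\to\Qmax(R)$'' does not carry out. The paper closes this by invoking Cateforis's theorem (Corollary 7.4, p.~259 in \cite{Stenstrom}), which states precisely that a right semihereditary ring $R$ whose $\Qmax(R)$ coincides with the maximal left flat epimorphic extension $\Qlrtot^l(R)$ is left semihereditary, and simultaneously yields $\Qmax(R)=\Qtot(R)=\Qlrtot^l(R)$; this is the ``known structure theorem'' you suspected exists, and without it (or an equivalent argument) your part (3) is incomplete. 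The remainder of your part (3) --- that $\Qmax(R)$ is left $R$-flat for right semihereditary $R$, hence perfect symmetric under the hypothesis, hence equal to $\Qsimtot(R)=\Qsimmax(R)$ --- is fine once that citation is supplied.
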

\begin{proof}
(1) If $R$ is right Ore, the embedding $R\rightarrow\Qcl$ is a ring epimorphism that makes $\Qcl$ into a flat left $R$-module (Example on p. 230 and Proposition II.3.5 in \cite{Stenstrom} or Exercises 17 and 18, p. 318 in \cite{Lam}). So, $\Qcl$ is right perfect. Similarly, $\Qlrcl^l$ is left perfect if $R$ is left Ore.

If $R$ is left and right Ore, $\Qlrcl=\Qcl=\Qlrcl^l$ so $\Qlrcl$ is both left and right perfect. Thus, it is perfect symmetric by Theorem \ref{PerfectSymmetricQuotient}. As it contains $R,$ $\Qlrcl\subseteq\Qsimtot.$ Also, $\Qlrcl$ is the perfect symmetric ring of quotients of theory $_{\Qlrcl}\tau_{\Qlrcl}$ by Theorem \ref{perfect_symmetric_filter}.

An element $m$ of an $R$-bimodule $M$ is in the torsion submodule for the classical torsion theory iff $ms=0$ and $tm=0$ for some regular elements $s$ and $t.$ However, since the classical right (left) torsion theory is equal to the right (left) tensoring torsion theory $\tau_{\Qlrcl}$ ($\,_{\Qlrcl}\tau$), this condition is equivalent with $0=m\otimes 1\in M\otimes_R \Qlrcl$ and $0=1\otimes m\in \Qlrcl\otimes_R M.$ Part (2) of Proposition \ref{symmetric_tensor} shows that this is equivalent to $m\in\;_{\Qlrcl}\te_{\Qlrcl}(M).$ Thus, the classical torsion theory coincides with $_{\Qlrcl}\tau_{\Qlrcl}.$

(2) If $R$ is regular, then $R=\Qtot(R)=\Qlrtot^l(R)$ by Example 1, p. 235 in \cite{Stenstrom}. Thus $R=\Qsimtot(R)$ by part (2) of Corollary \ref{corollary_of_Qsimtot_constuction}.

(3) By Corollary 7.4, p. 259 in \cite{Stenstrom}, the assumptions imply that $R$ is left semihereditary and that $\Qmax(R)=\Qlrtot^l(R)=\Qtot(R).$ By part (2) of Corollary \ref{corollary_of_Qsimtot_constuction}, $\Qmax(R)$ is equal to $\Qsimtot(R)$ also. As $\Qsimtot(R)\subseteq\Qsimmax(R)\subseteq\Qmax(R),$ we have that $\Qsimtot(R)=\Qsimmax(R)=\Qmax(R)$ in this case.

(4) $\Qmax(R)$ is semisimple, then $\Qtot(R)=\Qmax(R)$ by Proposition 5.3, p. 236 and Example 1, p. 237 in \cite{Stenstrom}. Similarly, $\Qlrtot^l(R)=\Qlmax(R).$ $\Qmax(R)=\Qlmax(R)$ then implies
$\Qsimmax(R)=\Qmax(R)=\Qlmax(R)=\Qtot(R)=\Qlrtot^l(R)=\Qsimtot(R).$
\end{proof}

\begin{example}
(1) Let $F$ be a field and $R=\left(\begin{array}{lc}F\;\; & F\oplus F\\ 0\;\; & F \end{array}\right).$ Consider the maps
$f_l: R\rightarrow M_3(F)$ and $f_r: R\rightarrow M_3(F)$ defined by
\[f_l:\left(\begin{array}{lc}a\;\; & (x,y)\\ 0\;\; & b \end{array}\right)\mapsto
\left(\begin{array}{ccc}a & x & y\\ 0 & b & 0\\ 0 & 0 & b\end{array}\right)\;\;\;\;\;\;\; f_r:\left(\begin{array}{lc}a\;\; & (x,y)\\ 0\;\; & b \end{array}\right)\mapsto
\left(\begin{array}{ccc}a & 0 & x\\ 0 & a & y\\ 0 & 0 & b\end{array}\right)
\]
These are inclusions of $R$ in $\Qlmax(R)$ and $\Qmax(R)$ respectively (see Exercise 8, p. 260 in \cite{Stenstrom}) such that $R$ is not dense as a left $R$-submodule of $\Qmax(R)$ and as a right submodule of $\Qlmax(R)$. Thus $\Qlmax(R)\cong\Qmax(R)$ but they do not coincide as overrings of $R.$ Since $M_3(F)$ is a semisimple ring,
$\Qtot(R)=\Qmax(R)$ and $\Qlrtot^l(R)=\Qlmax(R)$ (see Example 1, p. 237 and Proposition 5.3, p. 236 in \cite{Stenstrom}). The maximal symmetric ring of quotients is equal to $R$ (Example 2.26, p. 31 in \cite{Ortega_thesis}). Also, as $R$ can be represented as a path algebra of the quiver with two vertices 1 and 2 and two arrows both going from 1 to 2, the maximal symmetric ring of quotients of $R$ can be computed using the techniques from \cite{Ortega_paper_Qsimmax} or chapter 2 of \cite{Ortega_thesis}. $\Qsimmax(R)=R$ implies that $\Qsimtot(R)=R.$ Thus, we have that
\[\Qsimtot\subsetneq\Qlrtot^l,\;\;\; \Qsimtot\subsetneq\Qtot,\;\;\; \Qtot\neq\Qlrtot^l,\;\;\; \Qtot\cong\Qlrtot^l.\]

(2) Let $R=\left(\begin{array}{cccc} F & F & F & F\\ 0 & F & 0 & F\\ 0 & 0 & F & F\\ 0 & 0 & 0 & F\\ \end{array}\right).$ This ring provides another example of the situation described in previous example. $R$ has $\Qlmax(R)$ and $\Qmax(R)$ both isomorphic to $M_4(F)$ but not equal as overrings of $R.$ $\Qsimmax(R)$ is equal to $R$ (see Example 2.1 d), p. 18 in \cite{Ortega_thesis}). Thus $R=\Qsimtot(R).$ Since $M_4(F)$ is a semisimple ring, $\Qtot(R)=\Qmax(R)$ and $\Qlrtot^l(R)=\Qlmax(R).$

(3) The examples of rings with $R\subsetneq \Qsimtot(R)=\Qsimmax(R)$ can be found among the rings in the
class $\ce$ of Baer *-rings considered in \cite{Be2} and \cite{Lia2}.
A ring $R$ from $\ce$ has left and right maximal and classical
rings of quotients equal by Proposition 3 in \cite{Lia2}. Thus, $\Qsimtot(R)$ is equal to those quotients as well.
In this case, $\Qmax=\Qlmax=\Qsimmax=\Qtot=\Qlrtot^l=\Qsimtot=\Qlrcl.$  Group von Neumann algebras
are in class $\ce.$ A group von Neumann algebra $N(G)$ for $G$ infinite has maximal rings of quotients that is not semisimple and strictly contains the algebra $N(G)$ (see Exercise 9.10 in \cite{Luck}). So $N(G)\subsetneq \Qsimtot(N(G))$ in this case.
\label{Examples_of_total_symmetric}
\end{example}

\section{Adapting Morita's construction of $\Qtot$ to a construction of $\Qsimtot$}
\label{section_Morita_symmetric}

The total right ring of quotients $\Qtot(R)$ is usually
obtained as the directed union of the family of all subrings of
$\Qmax(R)$ that are perfect right rings of quotients of $R$ (see Theorem XI 4.1, p. 233, \cite{Stenstrom}).
In \cite{Morita3}, Morita constructs
$\Qtot(R)$ differently. The simplification of Morita's construction for a class of rings is
given in \cite{Lia4}.

In Theorem 3.1 of \cite{Morita3}, Morita showed that a ring
homomorphism $f:R\rightarrow S$ is a ring epimorphism with $S$
flat as a left $R$-module iff $S$ is the right ring of
quotients of $R$ with respect to the Gabriel filter $\ef_t(S)=\{I$ right ideal $|(f(r):f(I))S=S$ for every $r\in R\}.$
In this case $S=\{s\in S | (sf(r): f(R))S=S$ for every $r\in R\}.$
Motivated by this result, Morita considered the set
\[S'=\{s\in S | (sr: R)S=S\mbox{ for every }r\in R\}\]
for a ring extension $S$ of $R.$ By Theorem 3.1 of \cite{Morita3}, $S$ is a flat
epimorphic extension iff $S=S'.$ In Lemma 3.2 of
\cite{Morita3}, Morita proved that $S'$ is a subring of $S$ that
contains $R.$ Then he defined a family of subrings of $S$ indexed by ordinals on the following way: $S^{(0)}=S,$  $S^{(\alpha+1)}=(S^{(\alpha)})'$ and $S^{(\alpha)}=\bigcap_{\beta<\alpha}
S^{(\beta)}$ if $\alpha$
is a limit ordinal. Finally, Morita proved that there is an ordinal $\gamma$ such
that $S^{(\gamma)}=(S^{(\gamma)})'=S^{(\gamma+1)}.$ Thus, this construction produces the largest flat epimorphic extension of $R$
contained in a given extension $S.$ By taking $S=\Qmax(R),$ Morita arrived to $\Qtot(R).$

In this section, we modify this construction to the construction of the total symmetric ring of quotients. For a ring $R$ and its extension $S$, define \[S'=\{s\in S\; | \; S(R:rs)=S\mbox{ and }(sr:R)S=S\mbox{ for every }r\in R\}.\]
\begin{proposition}
\begin{enumerate}
\item $S'$ is a subring of $S$ that contains $R.$

\item $S'$ is a symmetric ring of quotients with respect to the torsion theory induced by $\ef_l=\{I| S(I:r)=S$ for all $r\in R \}$ and $\ef_r=\{J| (r:J)S=S$ for all $r\in R\}.$

\item $S$ is perfect symmetric ring of quotients if and only if $S'=S.$

\item Define
\[S^{(0)}=S,\;\;S^{(\alpha+1)}=(S^{(\alpha)})',\mbox{ and } S^{(\alpha)}=\bigcap_{\beta<\alpha}S^{(\beta)}\mbox{ for }\alpha\mbox{ a limit ordinal}.\]
Then there is an ordinal $\gamma$ such
that $S^{(\gamma)}=S^{(\gamma+1)}.$

\item If $R\subseteq T\subseteq S$ and the inclusion $R\subseteq T$ is a left and right flat epimorphism, then $T\subseteq S'.$

\item If $S$ is left and right $R$-flat, then $S'$ is the symmetric ring of quotients with respect to the tensoring torsion theory $_S\tau_S.$
\end{enumerate}
\label{Morita_for_symmetric}
\end{proposition}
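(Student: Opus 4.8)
The plan is to run Morita's strategy from \cite{Morita3} on the left and on the right at once, deriving everything from part (2). Write $S'_r=\{s\in S\mid (sr:R)S=S\text{ for all }r\in R\}$ for the (right-hand) one-sided Morita construction of \cite{Morita3} and $S'_l=\{s\in S\mid S(R:rs)=S\text{ for all }r\in R\}$ for its mirror image, obtained by applying the one-sided construction to $R^{op}\subseteq S^{op}$; by definition $S'=S'_r\cap S'_l$. Then part (1) follows from Lemma~3.2 of \cite{Morita3}, which shows $S'_r$ is a subring of $S$ containing $R$ and (applied to $R^{op}\subseteq S^{op}$) that $S'_l$ is too, together with the fact that the intersection of two subrings of $S$ each containing $R$ is again such a subring. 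Part (3) follows from Theorem~\ref{PerfectSymmetricQuotient}: $S$ is a perfect symmetric ring of quotients precisely when $R\hookrightarrow S$ is a ring epimorphism making $S$ left and right $R$-flat, and by Theorem~3.1 of \cite{Morita3} applied on each side this holds iff $S=S'_r$ and $S=S'_l$, i.e.\ iff $S=S'$. Part (4) follows from (1): by transfinite induction each $S^{(\alpha)}$ is a ring extension of $R$ inside $S$, with $S^{(\alpha+1)}\subseteq S^{(\alpha)}$ and $S^{(\alpha)}=\bigcap_{\beta<\alpha}S^{(\beta)}$ at limit ordinals, so the $S^{(\alpha)}$ form a decreasing chain of subsets of $S$ indexed by all ordinals, which must stabilize (otherwise the ordinals would inject into the power set of $S$). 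Part (5) follows from (3): if $R\subseteq T\subseteq S$ with $R\subseteq T$ a left and right flat epimorphism, then $T'=T$ by (3), so for $t\in T$ and $r\in R$ we have $(tr:R)T=T$ and $T(R:rt)=T$; the ideals $(tr:R)$ and $(R:rt)$ of $R$ are the same whether computed in $T$ or in $S$, and $T\subseteq S$, so $1\in(tr:R)S$ and $1\in S(R:rt)$, that is $t\in S'$.

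The substantive content is part (2). First I would check that $\ef_l$ and $\ef_r$ are Gabriel filters of left, respectively right, ideals: $\ef_r=\{J\mid (r:J)S=S\text{ for all }r\}$ is exactly the Gabriel filter appearing in Theorem~3.1 of \cite{Morita3}, $\ef_l$ is its left-handed analogue, and the quantifier over all $r\in R$ is precisely what makes the finite-intersection axiom and Gabriel's axiom (T4) go through, as in Morita. Next, for $s\in S'$ the identities $(r:(s:R))=(sr:R)$ and $((R:s):r)=(R:rs)$ show that the right ideal $(s:R)=\{t\in R\mid st\in R\}$ lies in $\ef_r$ and the left ideal $(R:s)=\{t\in R\mid ts\in R\}$ lies in $\ef_l$; moreover $g_s\colon(s:R)\to R,\ j\mapsto sj$ and $f_s\colon(R:s)\to R,\ i\mapsto is$ are a right and a left $R$-homomorphism satisfying $f_s(i)j=(is)j=i(sj)=ig_s(j)$, so $(f_s,g_s)$ is a compatible pair in Ortega's description of $\,_lR_r$ (see Proposition~1.4 in \cite{Ortega_paper}). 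This produces a ring homomorphism $\Psi\colon S'\to\,_lR_r$, $s\mapsto[(f_s,g_s)]$, with $\Psi|_R=q_R$. Injectivity is immediate: if $\Psi(s)=0$ then $f_s$ vanishes on some $I'\in\ef_l$, that is $I's=0$, and since $SI'=S$ (the case $r=1$), writing $1=\sum_k s_k a_k$ with $a_k\in I'$ gives $s=\sum_k s_k(a_ks)=0$. The delicate point is surjectivity: given a compatible pair $(f\colon I\to R,\ g\colon J\to R)$ with $I\in\ef_l$ and $J\in\ef_r$, one writes $1=\sum_k s_k b_k$ with $s_k\in S$, $b_k\in I$, sets $s=\sum_k s_k f(b_k)\in S$, and must verify that $s\in S'$ and that $(f_s,g_s)\sim(f,g)$; this repeats Morita's surjectivity argument in the proof of Theorem~3.1 of \cite{Morita3}, the only new ingredient being that compatibility of $(f,g)$ lets one and the same $s$ serve simultaneously on both sides. (Since $R\hookrightarrow S$ forces $\,_l\te_r(R)=0$, one may work with $R$ in place of $R/\,_l\te_r(R)$ throughout.) Once $\Psi$ is seen to be an isomorphism $S'\cong\,_{\ef_l}R_{\ef_r}$, part (2) is established.

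For part (6), assume $S$ is left and right $R$-flat, so $\,_S\te$ and $\te_S$ are closed under submodules. For a left ideal $I$ and any $r$, $R/(I:r)$ embeds in $R/I$ via $t+(I:r)\mapsto tr+I$, and symmetrically $R/(r:J)$ embeds in $R/J$; hence ``$S(I:r)=S$ for all $r$'' is equivalent to ``$SI=S$'' for a left ideal $I$ (one direction is the case $r=1$, the other uses that $\,_S\tau$ is hereditary), and ``$(r:J)S=S$ for all $r$'' is equivalent to ``$JS=S$'' for a right ideal $J$. Thus $\ef_l=\,_S\ef$ and $\ef_r=\ef_S$, and by Proposition~\ref{symmetric_tensor}(1) the filter induced by $\ef_l$ and $\ef_r$ is the filter $\,_S\ef_S$ of the symmetric tensoring torsion theory $\,_S\tau_S$; combined with part (2), $S'$ is the symmetric ring of quotients with respect to $\,_S\tau_S$.

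The main obstacle is part (2): concretely, verifying that $\ef_l$ and $\ef_r$ are Gabriel filters and, above all, establishing surjectivity of $\Psi$. Both depend on the precise form of the defining conditions (in particular the quantifier over all $r\in R$) and are handled by carrying Morita's one-sided analysis from \cite{Morita3} through on the two sides simultaneously.
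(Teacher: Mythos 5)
Your proposal is correct and follows essentially the same route as the paper: decompose $S'=S'_l\cap S'_r$ and import Morita's one-sided results for (1), (3), (5), use the cardinality argument for (4), identify $S'$ with $\,_{\ef_l}R_{\ef_r}$ via compatible pairs for (2), and reduce (6) to the equalities $\ef_l=\,_S\ef$, $\ef_r=\ef_S$ together with Proposition \ref{symmetric_tensor}. If anything, your treatment of part (2) is more explicit than the paper's about the surjectivity of the map $S'\to\,_lR_r$ (the paper passes directly between elements of $\,_lR_r$ and elements of $S$ without comment), so deferring that verification to Morita's Theorem 3.1 argument is entirely in keeping with the source.
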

\begin{proof}
(1) Denote $S'_l=\{s\in S| S(R:rs)=S\mbox{ for every }r\in R\}$ and $S'_r=\{s\in S| (sr:R)S=S\mbox{ for every }r\in R\}.$ From Morita's construction (Lemma 3.2 in \cite{Morita3}) the sets $S'_l$ and $S'_r$ are subrings of $S$ that contains $R$. Then $S'=S'_l\cap S'_r$ is a subring of $S$ that contains $R.$

(2) Note that $\ef_l$ and $\ef_r$ are Gabriel filters by Lemma 1.1 in \cite{Morita3}.
Let $_lR_r$ be the symmetric ring of quotients with respect to the torsion theory induced by filters $\ef_l$ and $\ef_r.$ Note that $R$ is torsion-free for this torsion theory. Indeed, if $r\in\;_l\te_r(R),$ then the left annihilator $\ann_l(r)$ is in $\ef_l$ and the right annihilator $\ann_r(r)$ is in $\ef_r.$ Thus $S(\ann_l(r):1)=S$ and so $1=\sum_{i=1}^n s_ir_i$  for some $s_i\in S$ and $r_i\in\ann_l(r).$ But then $r=1r=\sum s_i r_i r=\sum s_i 0=0.$

We show now that $_lR_r=S'.$ Let $s\in\,_lR_r.$ As $R$ is torsion-free, there are $I\in \ef_l$ with $Is\subseteq R$ and $J\in \ef_r$ with $sJ\subseteq R.$ As $Is\subseteq R,$ $I$ is contained in $(R:s).$ As $I\in \ef_l,$ $(R:s)$ is in $\ef_l.$ But that means that $S((R:s):r)=S$ for every $r\in R.$ As $(R: rs)=((R:s):r),$ $S(R: rs)=S.$
Similarly, we prove that $(sr:R)S=S$ using that $(s:R)$ is in $\ef_r.$ Then $s$ is in $S'.$

Conversely, if $s\in S',$ then $(R:s)\in \ef_l$ as $S=S(R: rs)=S((R:s):r)$ for every $r.$ Similarly, $(s:R)\in \ef_r$ as $(sr:R)=(r:(s:R)).$ Then $s$ defines a pair of compatible homomorphisms $R_s: (R:s)\rightarrow R$ and $L_s: (s:R)\rightarrow R.$ Thus, $s\in\,_lR_r.$

(3) Theorem 3.1 in \cite{Morita3} states that the inclusion $R\subseteq S$ is a left flat epimorphism iff  $S'_l=S.$ Thus, if the inclusion $R\subseteq S$ is a left and right flat epimorphism, then $S=S'_r$ and $S=S'_l.$ But then $S=S'_l\cap S'_r=S'.$ Conversely, if $S=S',$ then $S=S'_l=S'_r$ and so the inclusion $R\subseteq S$ is a left and right flat epimorphism.

(4) If $S^{(\gamma+1)}$ is strictly contained in $S^{(\gamma)}$ for every $\gamma,$ then $|S|\geq |S-S^{(\gamma)}|\geq |\gamma|$ for every ordinal $\gamma.$ This is a contradiction, so there is $\gamma$ with $S^{(\gamma)}=S^{(\gamma+1)}.$

(5) $T\subseteq S$ implies $T'\subseteq S'.$ If $T$ is perfect, $T'=T$ by part (3). Thus, $T\subseteq S'.$

(6) If $S$ is left and right flat, the sets of left and right ideals $_S\ef=\{I| SI=S\}$ and $\ef_S=\{J| JS=S\}$ are Gabriel filters (note that we need the condition that $S$ is flat to have that corresponding torsion theories are hereditary) and they induce $_S\tau_S$ by part (3) of Proposition \ref{symmetric_tensor}.
As $S'$ is the symmetric ring of quotients with respect to $\ef_l$ and $\ef_r$ by part (2), it is sufficient to prove that $_S\ef=\ef_l$ and $\ef_S=\ef_r.$ Let $I\in\ef_l.$ Then $S(I:1)=S$ so $SI=S.$ Conversely, if $SI=S,$ then $S(I:r)=S$ for all $r\in R$ by property (1) of Gabriel filters (see section \ref{section_right_quotients}).
\end{proof}

\begin{theorem} If $\gamma$ is the ordinal for which $\Qsimmax(R)^{(\gamma)}=\Qsimmax(R)^{(\gamma+1)}$ (and such ordinal exists by part (4) of Proposition \ref{Morita_for_symmetric}) then $\Qsimtot(R)=\Qsimmax(R)^{(\gamma)}.$
\label{Moritas_construction}
\end{theorem}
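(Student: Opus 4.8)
The plan is to prove the two inclusions $\Qsimmax(R)^{(\gamma)}\subseteq\Qsimtot(R)$ and $\Qsimtot(R)\subseteq\Qsimmax(R)^{(\gamma)}$ separately. Write $S=\Qsimmax(R)$ and $Q=S^{(\gamma)}$ for brevity. For the first inclusion, I would use that $\gamma$ is chosen so that $Q=S^{(\gamma)}=S^{(\gamma+1)}=Q'$; by part (3) of Proposition \ref{Morita_for_symmetric} this says precisely that the inclusion $R\hookrightarrow Q$ is a left and right flat ring epimorphism, i.e.\ that $Q$ is a perfect symmetric ring of quotients of $R$ in the sense of Theorem \ref{PerfectSymmetricQuotient}. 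This inclusion is automatically injective since $R\subseteq Q\subseteq S=\Qsimmax(R)$. Hence $Q$ belongs to the family $\pe$ of subrings of $\Qsimmax(R)$ whose directed union is $\Qsimtot(R)$ (see the proof of Theorem \ref{total_symmetric_quotient}), and therefore $Q\subseteq\Qsimtot(R)$.

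For the reverse inclusion I would argue by transfinite induction that $\Qsimtot(R)\subseteq S^{(\alpha)}$ for every ordinal $\alpha$; taking $\alpha=\gamma$ then gives $\Qsimtot(R)\subseteq Q$. The base case $\alpha=0$ is the inclusion $\Qsimtot(R)\subseteq\Qsimmax(R)=S^{(0)}$, which holds because $\Qsimtot(R)$ is realized inside $\Qsimmax(R)$ in the proof of Theorem \ref{total_symmetric_quotient}. By Theorem \ref{total_symmetric_quotient} the inclusion $R\hookrightarrow\Qsimtot(R)$ is an injective left and right flat epimorphism. Thus, for a successor ordinal, assuming $\Qsimtot(R)\subseteq S^{(\alpha)}$ (and using $R\subseteq S^{(\alpha)}$, which follows from part (1) of Proposition \ref{Morita_for_symmetric} applied along the chain), part (5) of Proposition \ref{Morita_for_symmetric} applied with $T=\Qsimtot(R)$ and $S^{(\alpha)}$ in the role of $S$ yields $\Qsimtot(R)\subseteq(S^{(\alpha)})'=S^{(\alpha+1)}$. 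For a limit ordinal $\alpha$ one simply intersects: $\Qsimtot(R)\subseteq\bigcap_{\beta<\alpha}S^{(\beta)}=S^{(\alpha)}$. Combining the two inclusions gives $\Qsimtot(R)=Q=\Qsimmax(R)^{(\gamma)}$.

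The proof involves no genuinely hard computation; the delicate points are the bookkeeping underlying the transfinite induction. First, one must ensure that at each stage the hypotheses of part (5) of Proposition \ref{Morita_for_symmetric} are literally available, i.e.\ that $\Qsimtot(R)$ really sits inside $S^{(\alpha)}$ as an overring of $R$ compatibly with the structure maps — this is exactly why it matters that $\Qsimtot(R)$ was constructed as a subring of $\Qsimmax(R)$ in Theorem \ref{total_symmetric_quotient} rather than only abstractly. Second, part (5) tacitly uses the monotonicity $T\subseteq S\Rightarrow T'\subseteq S'$ (visible in its proof), and I would note explicitly that this is what makes the descending chain $S^{(\alpha)}$ well defined and the induction well founded. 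Everything else is a direct appeal to Proposition \ref{Morita_for_symmetric} and Theorem \ref{total_symmetric_quotient}.
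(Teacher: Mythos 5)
Your proposal is correct and follows essentially the same route as the paper: part (3) of Proposition \ref{Morita_for_symmetric} shows $\Qsimmax(R)^{(\gamma)}$ is perfect symmetric and hence lies in the family $\pe$ whose directed union is $\Qsimtot(R)$, while part (5) gives the reverse containment $\Qsimtot(R)\subseteq\Qsimmax(R)^{(\alpha)}$ for all $\alpha$. The only difference is that you spell out the transfinite induction (successor step via part (5), limit step via intersection) that the paper leaves implicit in the single phrase ``for all ordinals $\alpha$.''
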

\begin{proof} By part (5) of Proposition \ref{Morita_for_symmetric}, $\Qsimtot(R)\subseteq\Qsimmax(R)^{(\alpha)}$ for all ordinals $\alpha.$ By part (4) of Proposition \ref{Morita_for_symmetric}, there is an ordinal $\gamma$ such that $\Qsimmax(R)^{(\gamma)}=\Qsimmax(R)^{(\gamma+1)}.$ Then, $\Qsimmax(R)^{(\gamma)}$ is a perfect symmetric ring of quotients by part (3) of Proposition \ref{Morita_for_symmetric}. As $\Qsimtot(R)$ is the largest of all perfect symmetric rings of quotients that contain $R,$ $\Qsimmax(R)^{(\gamma)}\subseteq\Qsimtot(R).$ Thus, $\Qsimmax(R)^{(\gamma)}=\Qsimtot(R).$

\end{proof}

\begin{example}
(1) Rings from class $\ce$ in part (3) of Examples \ref{Examples_of_total_symmetric} are examples of semihereditary rings for which $\Qmax=\Qlmax=\Qsimmax=\Qtot=\Qlrtot^l=\Qsimtot$ but the two-sided tensoring with $\Qsimmax=\Qmax$ does not define a perfect torsion theory (see Example 4.1 in \cite{Lia4}).

(2) The ring in Example 4.2 in \cite{Lia4} provides an example of a semihereditary ring $R$ with $\Qtot(R)=\Qmax(R)'_r\subsetneq \Qmax(R).$ As this ring is commutative, this is also an example of a ring with $\Qsimtot(R)=\Qsimmax(R)'\subsetneq\Qsimmax(R).$
\end{example}

\section{Questions and open problems}
\label{section_questions}

(1) Can the first sentence in parts (4), (5) and (6) of Theorem \ref{perfect_symmetric_filter} be deleted? If so, Theorem \ref{perfect_symmetric_filter} would completely parallel Theorem \ref{perfect_filter}.

(2) Both condition $\Qmax=\Qtot$ and condition $\Qmax=\Qlrtot^l$ have been studied in the past. In the first case, the study led to consideration of rings $R$ with $\Qmax(R)$ being right Kasch. In the second case, the study produced results of Goodearl (Thm. 7.1, p. 257 in \cite{Stenstrom}), Cateforis (Cor. 7.4, p. 259 in \cite{Stenstrom}), Evans (Thm. 2.4 in \cite{Evans}) and others. Corollary \ref{Qsimtot_of_special_rings} scratches the surface of the study of rings with $\Qsimmax=\Qsimtot.$

(3) In \cite{Ortega_paper_Qsimmax} and \cite{Ortega_thesis}, algorithms for calculating $\Qsimmax,$ $\Qmax$ and $\Qlmax$ of finite-dimensional incidence and path algebras are given in terms of the underlying partially ordered sets and finite quivers. It would be interesting to see if similar algorithms for calculating $\Qsimtot,$ $\Qtot$ and $\Qlrtot^l$ of incidence and path algebras exist.

\end{document}